\def\classification#1{\def\@class{#1}}
\def\classification#1{\def\@class{#1}}
\newcommand{\Sym}{\mathop{\mathrm{Sym}}}
\newcommand{\Z}{\mathbb{Z}}
\DeclareFontFamily{OT1}{rsfs}{}
\DeclareFontShape{OT1}{rsfs}{n}{it}{<-> rsfs10}{}
\DeclareMathAlphabet{\mathscr}{OT1}{rsfs}{n}{it}
\def\res#1{\raise-.5ex\hbox{\ensuremath|}_{#1}}
\newcommand{\RC}{\mathrm{RC}}
\newcommand{\base}{\mathrm{b}}
\newcommand{\Base}{\mathrm{B}}
\newcommand{\Height}{\mathrm{H}}
\newcommand{\Irred}{\mathrm{I}}
\newtheorem{prop}{Proposition}[section]
\newtheorem{thm}[prop]{Theorem}
\newtheorem{lem}[prop]{Lemma}
\theoremstyle{definition}
\numberwithin{equation}{section}
\theoremstyle{definition}
\begin{document}
\title{Statistics for $S_n$ acting on $k$-sets}

\author{Nick Gill}
\address{ Department of Mathematics, University of South Wales, Treforest, CF37 1DL, U.K.}
\email{nick.gill@southwales.ac.uk}

\author{Bianca Lod\`a}
\address{ Department of Mathematics, University of South Wales, Treforest, CF37 1DL, U.K.}
\email{bianca.loda@southwales.ac.uk}

\begin{abstract}
We study the natural action of $S_n$ on the set of $k$-subsets of the set $\{1,\dots, n\}$ when $1\leq k \leq \frac{n}{2}$. For this action we calculate the maximum size of a minimal base, the height and the maximum length of an irredundant base. 

Here a \emph{base} is a set with trivial pointwise stabilizer, the \emph{height} is the maximum size of a subset with the property that its pointwise stabilizer is not equal to the pointwise stabilizer of any proper subset, and an \emph{irredundant base} can be thought of as a chain of (pointwise) set-stabilizers for which all containments are proper.
\end{abstract}

\keywords{permutation group; height of a permutation group; relational complexity; base size}

\maketitle

\section{Introduction}\label{s: intro}

In this note we study three statistics pertaining to primitive permutation groups. Our main theorem gives the value of these three statistics for the permutation groups $S_n$ acting (in the natural way) on the set of $k$-subsets of the set $\{1,\dots, n\}$.

Before we state our main result, let us briefly define the three statistics in question (more complete definitions, as well as some background information, are given in \S\ref{s: defs}): suppose that $G$ is a finite permutation group on a set $\Omega$. We define, first, $\Base(G, \Omega)$ to be the maximum size of a minimal base for the action of $G$; we define, second, $\Height(G,\Omega)$ to be the maximum size of a subset $\Lambda\subseteq\Omega$ that has the property that its pointwise stabilizer is not equal to the pointwise stabilizer of any proper subset of $\Lambda$; we define, third, $\Irred(G,\Omega)$ to be the maximum length of an irredundant base for the action of $G$.

Our main result is the following.

\begin{thm}\label{t: main}
Let $k$ and $n$ be positive integers with $1\leq k\leq \frac{n}{2}$. Consider $S_n$ acting in the natural way on $\Omega_k$, the set of $k$-subsets of $\{1,\dots, n\}$.
 \begin{enumerate}
  \item $\Irred(S_n, \Omega_k)=\begin{cases}
                                n-1, & \textrm{if $\gcd(n,k)=1$}; \\
                                n-2, & \textrm{otherwise}.
                               \end{cases}$
  \item ${\rm B}(S_n, \Omega_k) = \Height(S_n, \Omega_k) = \begin{cases}
				    n-1, & \textrm{if } k=1; \\
                                    n-2, & \begin{array}{l} \textrm{if } k=2 \textrm{ or } \\ \textrm{if }k\geq 3 \textrm{ and } n=2k+2;\end{array} \\
                                    n-3, & \textrm{otherwise}.
                                   \end{cases}$
 \end{enumerate}
\end{thm}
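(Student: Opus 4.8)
The strategy is to treat the three invariants $\Irred$, $\Base$ and $\Height$ in turn, in each case proving a lower bound by exhibiting an explicit configuration of $k$-sets and an upper bound by a counting/combinatorial argument. Throughout, the key dictionary is between subsets $\Lambda \subseteq \Omega_k$ and the partition of $\{1,\dots,n\}$ obtained as the join of the coordinate-wise partitions induced by the members of $\Lambda$: the pointwise stabilizer of $\Lambda$ in $S_n$ is the stabilizer of this partition together with whatever extra rigidity the $k$-sets impose on the blocks. So the first step is to set up this correspondence carefully: given $\Lambda = \{A_1,\dots,A_m\}$ with each $A_i \in \Omega_k$, record the Boolean combinations $\bigcap_i A_i^{\e_i}$ ($\e_i \in \{+,-\}$, $A^+ = A$, $A^- = \{1,\dots,n\}\setminus A$); the pointwise stabilizer is $\prod \Sym(\text{atoms})$ except that it must also fix each $A_i$, which is automatic once it fixes every atom setwise — hence $\stab(\Lambda) = \prod_j \Sym(C_j)$ where the $C_j$ are the atoms of the Boolean algebra generated by $\Lambda$. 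This reduces everything to a question about how few $k$-sets are needed to cut $\{1,\dots,n\}$ into singletons (for bases), and about chains and minimality of such cutting systems.

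\textbf{Irredundant bases.} For $\Irred$, an irredundant base corresponds to a chain $\Lambda_0 \subsetneq \Lambda_1 \subsetneq \cdots \subsetneq \Lambda_\ell$ with strictly decreasing stabilizers, i.e. each new $k$-set strictly refines the current atom-partition. The length $\ell$ is at most the number of times one can strictly refine a partition of an $n$-set starting from the trivial partition, which is $n-1$; but adding a single $k$-set can only ever split off atoms in a constrained way, and the parity/gcd obstruction enters because the very last refinement — going from a partition with two blocks to the all-singletons partition, or more generally the structure near the top — cannot always be achieved by a $k$-set when $\gcd(n,k) = d > 1$. Concretely I would show: (i) a chain of length $n-1$ exists iff one can realise every "atomic" refinement step by some $k$-set, which is possible when $\gcd(n,k)=1$ by a direct construction (e.g. nested/interval-type $k$-sets shifting one element at a time); and (ii) when $d > 1$, a short argument on the sizes of atoms modulo $d$ shows the last step is blocked, so $\Irred \le n-2$, matched by an explicit chain of that length. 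The divisibility bookkeeping in (ii) is the first place real care is needed.

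\textbf{Base size and height.} For $\Base$ and $\Height$ the target is the minimum size of a base (for the $\Base$ lower bound one wants a large *minimal* base, for $\Height$ a large subset with the irredundancy-like property), so I would first pin down $b_{\min}(S_n,\Omega_k)$, the least number of $k$-sets whose Boolean algebra has only singleton atoms: a greedy/dimension count gives roughly $\log_2$-type bounds but the sharp statement is that $\lceil \log_2 n\rceil$-ish many suffice, far below $n-3$, so the interesting quantity is the *maximum* size of a minimal base, which is where $\Height$ coincides with $\Base$ by the general inequality $\Base \le \Height$ (stated in the background section) together with a matching construction. The plan is: (a) build, for each of the three regimes ($k=1$; $k=2$ or $n=2k+2$; otherwise), an explicit set of $k$-sets of the stated size that is a base and is minimal (no proper subset is a base) — for $k=1$ this is the classical $S_n$ fact that $n-1$ points form a minimal base; for larger $k$ one uses $k$-sets that pairwise overlap in a controlled "path-like" pattern so that removing any one merges exactly one pair of elements; (b) prove the upper bound on $\Height$ by showing that in any irredundant-type chain realising the height, once the atom-partition has few enough blocks the remaining $k$-sets are forced to be redundant — this is again a counting argument on atom sizes, and the thresholds $k=2$, $n=2k+2$, $k\ge 3$ emerge from exactly when a $k$-set can still split a two- or three-block partition nontrivially.

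\textbf{Main obstacle.} The hard part will be the upper bounds for $\Height$ in the generic regime (getting $n-3$ rather than $n-2$): one must show that \emph{every} subset of $\Omega_k$ with the stabilizer-irredundancy property has size $\le n-3$, which means ruling out all configurations of $n-2$ or $n-1$ pairwise-"independent" $k$-sets. This requires understanding precisely which atom-partitions of $\{1,\dots,n\}$ are realisable as the join of $k$-subset partitions and how a single $k$-set can refine a given partition — essentially a classification of the "one-step refinements", with the three exceptional cases $k=1$, $k=2$, $n=2k+2$ falling out as the partitions for which an extra refinement survives. Establishing that refinement lemma cleanly, and then doing the induction on the number of blocks without losing track of the exceptional small cases, is where the bulk of the work lies; the lower-bound constructions, by contrast, should be routine once the right overlap pattern is guessed.
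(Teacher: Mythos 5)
Your framework---encoding $\Lambda\subseteq\Omega_k$ by the partition of $\{1,\dots,n\}$ into the atoms of the Boolean algebra it generates, so that the pointwise stabilizer is the direct product of the symmetric groups on the atoms---is exactly the paper's starting point, and your treatment of $\Irred$ is on the right track: the bound $n-1$ comes from counting parts (each strict drop in the stabilizer chain forces at least one new atom), and when $\gcd(n,k)=g>1$ the obstruction is a divisibility argument (the paper shows that in a chain where every step adds exactly one part, \emph{every} part of \emph{every} intermediate partition has size divisible by $g$, so such a chain has length at most $n/g-1$; it is not only ``the last step'' that is blocked). However, your lower bound in the coprime case is underspecified in a way that matters: ``interval-type $k$-sets shifting one element at a time'' only yields length $n-2$, because the very first shifted set already splits off two new atoms in a single step. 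A chain of length $n-1$ requires every step to add exactly one part, and achieving this genuinely uses coprimality; the paper does it by a recursion on $k$ in the style of the Euclidean algorithm (tile $\{1,\dots,n\}$ by $\lfloor n/k\rfloor$ blocks of size $k$ plus a remainder of size $r$, then recurse inside a block using subsets of size $k-r$, exploiting $\gcd(k,k-r)=1$). Without some such mechanism your construction does not close the gap between $n-2$ and $n-1$.

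The more serious gap is in your upper bound for $\Height$. Pure counting of atoms bottoms out at $n-2$: once two members of $\Lambda$ meet nontrivially, those two sets already produce four atoms, and part-counting gives $|\Lambda|\le n-2$ (this is the paper's Lemma~\ref{l: n-2}, and it is the best that counting alone can do). Getting down to $n-3$, and isolating the exception $n=2k+2$, cannot be achieved by ``a counting argument on atom sizes'' or by classifying one-step refinements: in every configuration that counting does not eliminate, one must exhibit a \emph{proper subset} of $\Lambda$ whose pointwise stabilizer already equals that of $\Lambda$, violating independence. The paper does this by viewing $\Lambda$ as a $k$-uniform hypergraph, splitting by the number of connected components, and, in the connected case, fixing incident edges $E_1,E_2$ with atoms $R=E_1\cap E_2$, $S$, $T$, $U=\Omega\setminus(E_1\cup E_2)$ and running an explicit case analysis (Lemma~\ref{l: final}) showing that any permutation fixing $E_1,E_3,E_4$ for suitable further edges $E_3,E_4$ automatically fixes $E_2$. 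Moreover the exceptional value $n=2k+2$ does not ``fall out'' as a partition admitting an extra refinement: it arises from the two-component configuration in which each component has exactly $k+1$ vertices and is spanned by any two of its edges, so that complementation inside each component reduces to the natural point action and each component supports an independent set of size $|C_i|-1$ rather than $|C_i|-2$. Your plan as written contains neither the redundancy-finding argument nor the structural source of this exception, and these are where the substance of part (2) lies.
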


\subsection{Definition of statistics}\label{s: defs}  
Throughout, consider a finite permutation group $G$ on a set $\Omega$.  Let $\Lambda = \{\omega_1,\dots,\omega_k\} \subseteq \Omega$; we write
$G_{(\Lambda)}$ or $G_{\omega_1, \omega_2, \dots, \omega_k}$  for the
pointwise stabilizer. 

If $G_{(\Lambda)} = \{1\}$, then we say that $\Lambda$ is a \emph{base}. We say that a base is a \emph{minimal base} if no proper subset of it is a base. We denote the minimum size of a minimal base $\base(G,\Omega)$, and the maximum size of a minimal base $\Base(G, \Omega)$.

We say that $\Lambda$ is an \emph{independent set} if its pointwise stabilizer is not equal to the pointwise stabilizer of any proper subset of $\Lambda$. We define the \emph{height} of $G$ to be the maximum size of an independent set, and we denote this quantity $\Height(G, \Omega)$. 

Given an ordered sequence of elements of $\Omega$, $[\omega_1,\omega_2,\dots, \omega_\ell]$, we can study the associated \emph{stabilizer chain}:
\[
  G \geq G_{\omega_1} \geq G_{\omega_1, \omega_2}\geq 
  G_{\omega_1, \omega_2, \omega_3} \geq \dots \geq
  G_{\omega_1, \omega_2, \dots, \omega_\ell}.
\]
If all the inclusions given above are strict, then the stabilizer chain is called \emph{irredundant}. If, furthermore, the group $G_{\omega_1,\omega_2,\dots, \omega_\ell}$ is trivial, then the sequence $[\omega_1,\omega_2,\dots, \omega_\ell]$ is called an \emph{irredundant base}. The length of the longest possible irredundant base is denoted $\Irred(G,\Omega)$. Note that, defined in this way, an irredundant base is not a base (because it is an ordered sequence, not a set).

Let us make some basic observations. First, it is easy to verify the following inequalities:
\begin{equation}\label{basic}
 \base(G,\Omega) \leq \Base(G,\Omega) \leq \Height(G,\Omega) \leq \Irred(G,\Omega).
\end{equation}
Second, it is easy to see that $\Lambda=\{\omega_1, \omega_2, \dots, \omega_k\}$ is independent if and only if the pointwise stabilizer of $\Lambda$ is not equal to the pointwise stabilizer of $\Lambda\setminus\{\omega_i\}$ for all $i=1,\dots, k$. Third, any subset of an independent set is independent.

\subsection{Some context}\label{s: context}
Our interest in the statistics considered here was stimulated by our study of yet another statistic, the \emph{relational complexity} of the permutation group $G$, denoted $\RC(G,\Omega)$. This statistic was introduced in \cite{cherlin_martin}; it can be defined as the least $k$ for which $G$ can be viewed as the automorphism group of a homogeneous relational structure whose relations are $k$-ary \cite{cherlin2}.

It is an exercise to confirm that $\RC(G,\Omega)\leq \Height(G,\Omega)+1$ for any permutation group $G$ on a set $\Omega$ \cite{glodas}. Anecdotally it would seem that $\RC(G,\Omega)$ tends to track $\Height(G,\Omega)+1$ rather closely: it often seems to equal this value or to be rather close to it.
In this respect Theorem~\ref{t: main} tells us that the action of $S_n$ on the set of $k$-sets is an aberration: in \cite{cherlin1}, Cherlin calculates that $\RC(S_n,\Omega_k)=\lfloor \log_2 k \rfloor +2$; asymptotically this is very far from the value for the height that is given in Theorem~\ref{t: main}.

In a different direction, an earlier result with Spiga, along with work of Kelsey and Roney-Dougal, asserts that the statistics $\Height(G,\Omega)$ and $\Irred(G,\Omega)$ satisfy a particular upper bound whenever $G$ is primitive and not in a certain explicit family of permutation groups \cite{glodas,krd}. Ultimately we would like to calculate the value of $\Height(G,\Omega)$ and $\Irred(G,\Omega)$ for all of the permutation groups in this explicit family; our calculation of $\Height(S_n,\Omega_k)$ and $\Irred(S_n,\Omega_k)$ is the first step in this process.

The one statistic that we have neglected in our study is $\base(G,\Omega)$. The value of this statistic for the actions under consideration has not been completely worked out, although significant progress has been made (see \cite{cggmp, halasi} as well as \cite{bailey_cameron} and the references therein). On the other hand, for those primitive actions of $S_n$ for which a point-stabilizer acts primitively in the natural action on $\{1,\dots, n\}$, the value of $\base(G,\Omega)$ is known \cite{burness_guralnick_saxl}.

Finally it is worth mentioning that, in general, $\ell(G)$, the maximum length of a chain of subgroups in a group $G$, is an upper bound for $\Irred(G,\Omega)$ for any faithful action of the group $G$ on a set $\Omega$. It is known that $\ell(S_n)=\lfloor\frac{3n-1}{2}\rfloor-b_n$, where $b_n$ is the number of $1$'s in the binary expansion of $n$ \cite{cameron_solomon_turull}. 

\subsection{Acknowledgments}

The work of the first author was supported by EPSRC grant EP/R028702/1. 

\section{The proof}\label{s: proof}

In this section we prove Theorem~\ref{t: main}. Throughout the proof we will write $G$ for $S_n$. We need some terminology. 

Suppose that $\Delta=\{\delta_1,\dots, \delta_\ell\}$ is a set of non-empty subsets of $\{1,\dots, n\}$. We define $\mathcal{P}_\Delta$, the \emph{partition associated with $\Delta$ on $\{1,\dots, n\}$}, to be the
partition of $\{1,\dots, n\}$ associated with the equivalence relation $\sim$ given as follows: for $x,y\in\{1,\dots, n\}$, we have $x\sim y$ if and only if for all $i=1,\dots, \ell$, $x\in \delta_i\Longleftrightarrow y\in\delta_i$. If $\Delta$ is empty, then we define $\mathcal{P}_\Delta$ to be the partition with a single part of size $n$.

It is an easy exercise to check that, first, $\mathcal{P}_\Delta$ can be obtained by taking intersections of all elements of $\Delta$; second, the pointwise stabilizer of $\Delta$ in $S_n$ is simply the stabilizer of all parts of $\mathcal{P}_\Delta$.

If $i,j\in \{1,\dots, n\}$ and $\omega \in \Omega_k$, then we will say that $\omega$ \emph{splits} $i$ and $j$ if $|\{i,j\}\cap \omega|=1$. In particular, if $\omega$ splits $i$ and $j$ then for any set $\Delta$ such that $\omega\in\Delta\subseteq \Omega_k$, we have $i\not\sim j$ where $\sim$ is the equivalence relation associated with $\Delta$.

\subsection{The result for \texorpdfstring{$\Irred(G,\Omega_k)$}{I(G,Omegak)}}\label{s:i}
We will use the terminology above and begin with a couple of lemmas.

\begin{lem}\label{l: i}
Let $d,e\in\Z^+\cup\{0\}$ with $e>d$, let $H$ be a permutation group on the set $\{1,\dots, n\}$, let $\Delta=\{\delta_1,\dots, \delta_d\}$ be a set of non-empty subsets of $\{1,\dots, n\}$, let $\Lambda$ be a set of non-empty subsets of $\{1,\dots, n\}$ that contains $\Delta$ and let $\Lambda \setminus \Delta=\{\lambda_{d+1}, \dots, \lambda_e\}.$ Suppose that
\[
  H\gneq H_{\delta_1} \gneq H_{\delta_1, \delta_2} \gneq \cdots \gneq H_{\delta_1, \dots, \delta_d} \gneq H_{\delta_1,\dots, \delta_d, \lambda_{d+1}}\gneq H_{\delta_1,\dots, \delta_d, \lambda_{d+1},\lambda_{d+2}} \gneq \cdots \gneq H_{(\Lambda)}.
 \]
 If $\mathcal{P}_\Delta$ has $r$ parts and $\mathcal{P}_\Lambda$ has $s$ parts, then $|\Lambda|=e\leq d+s-r$. 
\end{lem}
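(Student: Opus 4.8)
The plan is to track how the partition associated with the accumulating family of subsets behaves as one descends the stabilizer chain, and to show that every strict drop of the stabilizer is witnessed by the associated partition acquiring at least one new part.

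First I would record the key fact, essentially contained in the observations preceding the lemma: for any set $\Delta'$ of non-empty subsets of $\{1,\dots,n\}$, each member of $\Delta'$ is a union of parts of $\mathcal{P}_{\Delta'}$, so the pointwise stabilizer of $\Delta'$ in $S_n$ is precisely the subgroup fixing every part of $\mathcal{P}_{\Delta'}$ setwise. Hence $H_{(\Delta')}=H\cap (S_n)_{(\Delta')}$ depends on $\Delta'$ only through the partition $\mathcal{P}_{\Delta'}$; in particular, if $\Delta'\subseteq \Delta''$ with $\mathcal{P}_{\Delta'}=\mathcal{P}_{\Delta''}$, then $H_{(\Delta')}=H_{(\Delta'')}$.

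Next I would set $\Delta_d:=\Delta$ and, for $d<j\leq e$, $\Delta_j:=\{\delta_1,\dots,\delta_d,\lambda_{d+1},\dots,\lambda_j\}$, so that $\Delta_e=\Lambda$; writing $\mathcal{P}_j:=\mathcal{P}_{\Delta_j}$, the partition $\mathcal{P}_d$ has $r$ parts and $\mathcal{P}_e$ has $s$ parts. Adjoining a set to $\Delta_{j-1}$ only adds a constraint to the equivalence relation defining the associated partition, so $\mathcal{P}_j$ refines $\mathcal{P}_{j-1}$ for every $j\in\{d+1,\dots,e\}$. By hypothesis $H_{(\Delta_{j-1})}\gneq H_{(\Delta_j)}$, so the fact above forces $\mathcal{P}_j\neq\mathcal{P}_{j-1}$; and a proper refinement of a partition splits some part into at least two, hence has at least one more part than the original. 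Therefore the number of parts strictly increases at each of the $e-d$ steps from $\mathcal{P}_d$ to $\mathcal{P}_e$, giving $s\geq r+(e-d)$, i.e.\ $e\leq d+s-r$.

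I do not expect a genuine obstacle here. The one point requiring care is the reduction in the second paragraph — that distinct pointwise stabilizers cannot come from the same associated partition even though $H$ is an arbitrary permutation group rather than $S_n$ itself — but this is immediate from $H_{(\Delta')}=H\cap (S_n)_{(\Delta')}$ together with the description of $(S_n)_{(\Delta')}$ in terms of $\mathcal{P}_{\Delta'}$. It is also worth noting that the top portion $H\gneq H_{\delta_1}\gneq\cdots\gneq H_{\delta_1,\dots,\delta_d}$ of the displayed chain is never used: only the strict inclusions from $H_{(\Delta)}$ downward enter the argument.
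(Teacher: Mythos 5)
Your proof is correct and follows essentially the same route as the paper: both arguments track the partitions $\mathcal{P}_{\Delta_j}$ along the chain and observe that each strict drop in the stabilizer forces the associated partition to gain at least one part, yielding $s\geq r+(e-d)$. Your version merely spells out more explicitly why a strict containment of stabilizers forces a proper refinement (via the fact that the pointwise stabilizer depends only on the associated partition), and correctly notes that the top portion of the chain is not needed.
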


Note that if $\Delta$ is empty, then the lemma applies with $d=0$ and $r=1$, and we obtain that $|\Lambda|\leq s-1$.

\begin{proof}
 For $i=1,\dots, d$, let $\mathcal{P}_i$ be the partition associated with the set $\{\delta_1,\dots, \delta_i\}$ and, for $i=d+1,\dots, e$, let $\mathcal{P}_i$ be the partition associated with the set $\Delta\cup\{\lambda_{d+1},\lambda_{d+2},\dots, \lambda_i\}$. Since all the containments are proper, $\mathcal{P}_{i+1}$ has at least one more part than $\mathcal{P}_{i}$ for all $i=1,\dots, e-1$. There are $|\Delta|$ containments up to $H_{(\Delta)}=H_{\delta_1,\dots, \delta_d}$ and then the number of containments after that is at most $s-r$. The result follows.
\end{proof}

\begin{lem}\label{l: j}
 Let $\ell\in\Z^+$, let $g=\gcd(n,k)$, let $\omega_1,\dots,\omega_\ell$ be $k$-subsets of $\Omega$ and let $\mathcal{P}_i$ be the partition associated with $\{\omega_1,\dots, \omega_i\}$. If $\mathcal{P}_{i+1}$ has exactly one more part than $\mathcal{P}_{i}$ for all $i=1,\dots, k-1$, then all parts of $\mathcal{P}_\ell$ have size divisible by $g$. 
\end{lem}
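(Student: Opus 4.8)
The plan is to prove, by induction on $i$, the statement that every part of $\mathcal{P}_i$ has size divisible by $g$, given that $\mathcal{P}_{j+1}$ has exactly one more part than $\mathcal{P}_j$ for each $j<i$; taking $i=\ell$ then yields the lemma. The mechanical fact underlying the induction is that $\mathcal{P}_{i+1}$ is the common refinement of $\mathcal{P}_i$ and the two-part partition $\{\omega_{i+1},\,\{1,\dots,n\}\setminus\omega_{i+1}\}$ — this is immediate from the definition of the associated equivalence relation, since $x\sim y$ for $\{\omega_1,\dots,\omega_{i+1}\}$ precisely when $x\sim y$ for $\{\omega_1,\dots,\omega_i\}$ and also $x\in\omega_{i+1}\Leftrightarrow y\in\omega_{i+1}$. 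Hence each part $P$ of $\mathcal{P}_i$ either survives unchanged in $\mathcal{P}_{i+1}$, or, if $P$ meets both $\omega_{i+1}$ and its complement, splits into exactly the two nonempty pieces $P\cap\omega_{i+1}$ and $P\setminus\omega_{i+1}$. So the number of parts increases by exactly the number of parts of $\mathcal{P}_i$ that $\omega_{i+1}$ splits, and the hypothesis ``$\mathcal{P}_{i+1}$ has precisely one more part than $\mathcal{P}_i$'' translates into: exactly one part $P$ of $\mathcal{P}_i$ is split by $\omega_{i+1}$, and every other part of $\mathcal{P}_i$ is either contained in $\omega_{i+1}$ or disjoint from $\omega_{i+1}$.

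For the base case $i=1$, the partition $\mathcal{P}_1$ has the two parts $\omega_1$ and $\{1,\dots,n\}\setminus\omega_1$, of sizes $k$ and $n-k$; since $g=\gcd(n,k)$ divides both $k$ and $n$, it divides $n-k$, so both parts have size divisible by $g$. For the inductive step, assume every part of $\mathcal{P}_i$ has size divisible by $g$, and let $P$ be the unique part of $\mathcal{P}_i$ split by $\omega_{i+1}$. Writing $\omega_{i+1}$ as the disjoint union of $P\cap\omega_{i+1}$ together with those parts of $\mathcal{P}_i$ that it contains wholly, and using $|\omega_{i+1}|=k$, the divisibility of $k$ and of each of those whole parts by $g$ forces $g\mid|P\cap\omega_{i+1}|$, and then also $g\mid|P\setminus\omega_{i+1}|=|P|-|P\cap\omega_{i+1}|$. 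Every remaining part of $\mathcal{P}_{i+1}$ is an untouched part of $\mathcal{P}_i$, so all parts of $\mathcal{P}_{i+1}$ have size divisible by $g$, completing the induction.

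I do not expect a genuine obstacle here: it is a short induction once the setup is in place. The only point that needs to be handled carefully is the counting identity ``number of new parts $=$ number of old parts that get split,'' since this is exactly what converts the one-more-part hypothesis into the usable structural statement that $\omega_{i+1}$ is a union of parts of $\mathcal{P}_i$ together with a proper nonempty subset of a single part of $\mathcal{P}_i$; after that, the divisibility bookkeeping is automatic.
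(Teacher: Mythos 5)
Your proof is correct and follows essentially the same route as the paper: induction on $i$, with the base case $\mathcal{P}_1=\{\omega_1,\,\Omega\setminus\omega_1\}$ and the inductive step resting on the observation that the one-more-part hypothesis forces $\omega_{i+1}$ to be a union of whole parts of $\mathcal{P}_i$ together with a proper nonempty subset of exactly one part, after which the divisibility of that subset's size by $g$ follows from $g\mid k$. The only difference is that you justify the refinement count (``new parts $=$ parts split'') more explicitly than the paper does, which is a harmless elaboration.
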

\begin{proof}
We proceed by induction. Observe that $\mathcal{P}_1$ has two parts, one of size $k$ and the other of size $n-k$. Both $k$ and $n-k$ are divisible by $g$ and so the result is true for $i=1$.

Let $i\in\{1,\dots, k-1\}$ and assume that all parts of $\mathcal{P}_i$ have size divisible by $g$. The property that $\mathcal{P}_{i+1}$ has exactly one more part than $\mathcal{P}_i$ implies that, with precisely one exception, if $P$ is a part of $\mathcal{P}_{i+1}$, then $|\omega_{i+1}\cap P|\in\{0,|P|\} $. In other words 
\[
 \omega_{i+1}=P_1\cup \dots \cup P_m \cup X,
\]
where $m$ is some integer, $P_1\dots, P_{m+1}$ are parts of $\mathcal{P}_i$ and $X$ is a proper subset of part $P_{m+1}$. Note that, by assumption, $|\omega_{i+1}|=k$ is divisible by $g$. What is more the inductive hypothesis implies that $|P_1|,\dots,|P_m|$ are divisible by $g$, hence the same is true of $|X|$. But now $\mathcal{P}_{i+1}$ has the same parts as $\mathcal{P}_i$ except that part $P_{m+1}$ has been replaced by two parts, $X$ and $P_{m+1}\setminus X$, both of which have size divisible by $g$. The result follows.
\end{proof}

We are ready to prove item (1) of Theorem~\ref{t: main}. First we let $\Lambda$ be an independent set and observe that Lemma~\ref{l: i} implies that $|\Lambda|\leq n-1$; thus $\Irred(G,\Omega_k)\leq n-1$.

Next we suppose that $\gcd(n,k)=1$ and we must show that there exists a stabilizer chain 
\[G \gneq G_{\omega_1} \gneq G_{\omega_1, \omega_2}\gneq 
  G_{\omega_1, \omega_2, \omega_3} \gneq \dots \gneq
  G_{\omega_1, \omega_2, \dots, \omega_{n-1}},
\]
with $\omega_1,\dots, \omega_{n-1}\in\Omega_k$. Observe that if such a chain exists, then, writing $\mathcal{P}_i$ for the partition associated with $\{\omega_1,\dots, \omega_i\}$, it is clear that, for $i=1,\dots, n-2,$ the partition $\mathcal{P}_{i+1}$ has exactly one more part than $\mathcal{P}_i$.

We show the existence of such a chain by induction on $k$: If $k=1$, then the result is obvious. Write $d=\lfloor n/k\rfloor$ and write $n=dk+r$. For $i=1,\dots, d$, we set
\[                                                                                                                                                                                                                                                    
  \omega_i=\{(i-1)k+1, \dots, ik\}.   
  \]
The stabilizer of these $d$ sets is associated with a partition, $\mathcal{P}_d$, of $d$ parts of size $k$ and one of size $r$. Now we will choose the next sets, $\omega_{d+1},\dots, \omega_{d+k-1}$, so that they all contain the part of size $r$ and so that the remaining $k-r$ points in each are elements of $\omega_1$. Observe that, since $(n,k)=1$, we know that $(k, k-r)=1$. Now the inductive hypothesis asserts that we can choose $k-1$ subsets of $\omega_1$, all of size $k-r$, so that the corresponding stabilizer chain in $\Sym(\{1,\dots, k\})$ is of length $k-1$, i.e. so that the corresponding chain of partitions of $\{1,\dots, k\}$ has the property that each partition has exactly one more part than the previous.

We can repeat this process for $\omega_2,\dots, \omega_d$, at the end of which we have constructed a stabilizer chain of length $dk$ for which the associated partition, $\mathcal{P}_{dk}$, has $dk$ parts of size $1$ and $1$ part of size $r$. A further $r-1$ subgroups can be added to the stabilizer chain by stabilizing sets of form
\[
 \{1,\dots, k-1, dk+i\}
\]
for $i=1,\dots, r-1$. We conclude that $\Irred(G, \Omega_k)=n-1$ if $\gcd(n,k)=1$.

On the other hand, let us see that $\Irred(G,\Omega_k)\geq n-2$ in general. Define
\[
 \omega_i=\begin{cases}
\{1,\dots, k-1, k+i-1\},  & \textrm{ if } i=1,\dots, n-k; \\
\{i-(n-k), k+1,\dots, 2k-1\} & \textrm{ if } i=(n-k)+1,\dots, n-2.
          \end{cases}
\]
It is easy to check that the corresponding stabilizer chain
\[G \geq G_{\omega_1} \geq G_{\omega_1, \omega_2}\geq 
  G_{\omega_1, \omega_2, \omega_3} \geq \dots \geq
  G_{\omega_1, \omega_2, \dots, \omega_{n-2}}
\]
is irredundant for $G=S_n$. 

Finally we assume that $\gcd(n,k)=g>1$ and we show that $\Irred(G,\Omega_k)\leq n-2$. We must show that it is not possible to construct a stabilizer chain of length $n-1$; as we saw above such a chain would have the property that at every stage the corresponding partition $\mathcal{P}_{i+1}$ has exactly one more part than $\mathcal{P}_i$. 

Suppose that we have a stabilizer chain with the property that, for all $i$, $\mathcal{P}_{i+1}$ has exactly one more part than $\mathcal{P}_i$. Now Lemma~\ref{l: j} implies that all parts of $\mathcal{P}_i$ have size divisible by $g$, for all $i$. We see immediately that such a stabilizer chain is of length at most $n/g-1<n-1$ and we are done.


\subsection{Preliminaries for \texorpdfstring{$\Base(G,\Omega_k)$}{B(G,Omegak)} and \texorpdfstring{$\Height(G,\Omega_k)$}{H(G,Omegak)}}

Note, first, that for the remaining statistics the result for $k=1$ is immediate; thus we assume from here on that $k>1$. Note, second, that to prove what remains we need to show that there exists a lower bound for ${\rm B}(G, \Omega_k)$ which equals an upper bound for $\Height(G, \Omega_k)$.

\subsection{The case \texorpdfstring{$k=2$}{k=2}}\label{s: k=2}

First assume that $k=2$, and observe that
\[
\Big\{ \{1,2\}, \{1,3\}, \dots, \{1,n-1\}\Big\}
\]
is a minimal base for $S_n$ acting on $\Omega_2$, and we obtain the required lower bound. 

For the upper bound on $\Height(G, \Omega_2)$ we let $\Lambda$ be an independent set. We construct a graph, $\Gamma_\Lambda,$ on the vertices $\{1,\dots, n\}$ as follows: there is an edge between $i$ and $j$ if and only if $\{i,j\}\in\Lambda$. 

\begin{lem}\label{l: ind}
Suppose that $H$ is a permutation group on $\Omega$ and consider the natural action of $H$ on $\Omega_2$. If $\Lambda$ is an independent subset of $\Omega_2$ with respect to the action of $H$, then $\Gamma_\Lambda$ contains no loops.
\end{lem}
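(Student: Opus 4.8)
The plan is to turn the independence of $\Lambda$ into a combinatorial statement about $\Gamma_\Lambda$ and then rule out cycles. First recall that, as was observed for $S_n$, for any set $\Delta$ of non-empty subsets of the underlying set the pointwise stabiliser $H_{(\Delta)}$ is exactly the subgroup of $H$ fixing every part of $\mathcal{P}_\Delta$; for a general permutation group $H$ this is immediate, since $H_{(\Delta)}$ is the intersection of $H$ with the corresponding pointwise stabiliser in the full symmetric group. In particular $\mathcal{P}_{\Lambda'}=\mathcal{P}_{\Lambda}$ implies $H_{(\Lambda')}=H_{(\Lambda)}$. Now assume $\Lambda$ is independent. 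Then for every $\lambda\in\Lambda$ we have $H_{(\Lambda\setminus\{\lambda\})}\ne H_{(\Lambda)}$, so $\mathcal{P}_{\Lambda\setminus\{\lambda\}}\ne\mathcal{P}_{\Lambda}$; since $\mathcal{P}_{\Lambda}$ always refines $\mathcal{P}_{\Lambda\setminus\{\lambda\}}$, this means there is a pair of points $x,y$ that is split by $\lambda$ and by no other element of $\Lambda$. So it is enough to show: if $\Gamma_\Lambda$ contains a cycle, then some edge $e_0$ of that cycle, regarded as an element of $\Lambda$, is \emph{not} the unique element of $\Lambda$ splitting any pair --- which contradicts independence.

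To carry this out, suppose $\Gamma_\Lambda$ contains a cycle. Since the elements of $\Omega_2$ are honest $2$-subsets, $\Gamma_\Lambda$ has no literal loops and no repeated edges, so the cycle has length $m\ge 3$. Fix an edge $e_0$ of it. If $e_0$ were the unique element of $\Lambda$ splitting some pair $\{x,y\}$, then exactly one endpoint of $e_0$ would lie in $\{x,y\}$; list the cycle's vertices as $v_1,\dots,v_m$ in cyclic order --- with edges $\{v_1,v_2\},\{v_2,v_3\},\dots,\{v_m,v_1\}$ --- so that $e_0=\{v_1,v_2\}$ and $v_1\in\{x,y\}$, and write $x=v_1$, so $y\notin\{v_1,v_2\}$. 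The edge $e'=\{v_m,v_1\}$ lies in $\Lambda$ and is different from $e_0$ (as $v_m\ne v_2$), so it does not split $\{v_1,y\}$; since $v_1\in e'$ this forces $y\in e'$, whence $y=v_m$. But the edge $e''=\{v_{m-1},v_m\}$ also lies in $\Lambda$ and is different from $e_0$ (as $v_m\notin\{v_1,v_2\}$), it contains $v_m=y$, and it does not contain $v_1=x$ (because $v_1\ne v_{m-1}$ and $v_1\ne v_m$); hence $e''$ splits $\{x,y\}$ as well, a contradiction. Thus $e_0$ is not the unique element of $\Lambda$ splitting any pair, contradicting the first paragraph; so $\Gamma_\Lambda$ has no cycle.

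The argument presents no genuine difficulty. The one place that calls for a little care is the bookkeeping in the final step when $m=3$: there $v_{m-1}=v_2$, and one has to confirm that $e'$ and $e''$ really are distinct from $e_0$, which holds because $v_1,v_2,v_3$ are distinct. The substance of the proof is the reduction in the first paragraph, which is a direct consequence of the material already in place. I would add that this lemma alone only yields $|\Lambda|=|E(\Gamma_\Lambda)|\le n-1$ (a forest on $n$ vertices), so obtaining the value $n-2$ for $k=2$ in Theorem~\ref{t: main} will need a further argument excluding spanning trees.
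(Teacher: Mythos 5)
Your proof is correct and is essentially the paper's argument: both delete an edge of the cycle and show that the pointwise stabilizer is unchanged, because the two remaining cycle edges at its endpoints already separate those endpoints from everything else (you phrase this via the partition $\mathcal{P}_\Lambda$ and ``unique splitting'', the paper via the stabilizer fixing the two vertices). Your closing remark is also accurate --- the paper does need the separate spanning-tree argument to get from $n-1$ down to $n-2$.
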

\begin{proof}
 Suppose that $[i_1,\dots, i_\ell]$ is a loop in the graph, i.e. $E_j=\{i_j, i_{j+1}\}$ is in $\Lambda$ for $j=1,\dots, \ell-1$, along with $E_\ell=\{i_1, i_\ell\}$. Now observe that if $E_j$ is removed from $\Lambda$ for some $j$, then the stabilizer of the resulting set, $\Lambda \setminus\{E_j\}$, fixes the two vertices contained in $E_j$. But this implies that $\Lambda$ is not independent, a contradiction.
\end{proof}

We apply Lemma~\ref{l: ind} to the action of $G=S_n$ on $\Omega_2$ and conclude that the graph $\Gamma_\Lambda$ is a forest. If $\Gamma_\Lambda$ is disconnected, then the result follows immediately. Assume, then, that $\Gamma_\Lambda$ is connected, i.e. it is a tree on $n$ vertices. In this case there are $n-1$ edges and we calculate directly that the point-wise stabilizer of $\Lambda$ is trivial. But now, observe that if we remove any set from $\Lambda$, then the point-wise stabilizer remains trivial. This is a contradiction and the result follows.


\subsection{A lower bound for \texorpdfstring{$\Base(G, \Omega_k)$}{B(G,Omegak)} when \texorpdfstring{$k>2$}{k>2}}\label{s: lower}
Assume for the remainder that $k>2$. We prove the lower bound first: first observe that the following set, of size $n-3$, is a minimal base with respect to $S_n$ (note that there are $n-k-1$ sets listed on the first row, and $k-2$ listed altogether on the second and third):
{\medmuskip=1mu
\thinmuskip=1mu
\thickmuskip=1mu
\begin{equation}\label{e: 1}
 \left\{
\begin{array}{c}
\Big\{1,2,\dots,k-1, k\Big\}, \Big\{1,2,\dots, k-1, k+1\Big\}, \dots, \Big\{1,2,\dots, k-1, n-2 \Big\},\\
\Big\{1, n-(k-1), n-(k-2), n-(k-3),\dots, n-1\Big\}, \Big\{2, n-(k-1), n-(k-2), n-(k-3),\dots, n-1\Big\},\\ \dots,
\Big\{k-2, n-(k-1), n-(k-2), n-(k-3),\dots, n-1\Big\}
\end{array}
\right\}.
 \end{equation}}
To complete the proof of the lower bound, we must deal with the case $n=2k+2$. For this we observe that the following set, which is of size $n-2=2k$, is a minimal base:
\begin{equation}\label{e: 2}
\Big\{ \{1,\dots, k+1\} \setminus \{i\} \mid i=1,\dots, k\Big\}\bigcup
\Big\{ \{k+2,\dots, 2k+2\} \setminus \{i\} \mid i=k+2,\dots, 2k+1\Big\}
 \end{equation}

\subsection{An upper bound for \texorpdfstring{$\Height(G, \Omega_k)$}{H(G,Omegak)} when \texorpdfstring{$k>2$}{k>2}}
We must prove that, if $n\geq 2k$, then an independent set in $\Omega_k$ has size at most $n-3$, except when $n=2k+2$, in which case it has size at most $n-2$. It turns out that it is easy to get close to this bound in a much more general setting, as follows.

\begin{lem}\label{l: n-2}
 Let $n\geq2$, let $\Delta$ be a set of subsets of $\Omega=\{1,\dots, n\}$ and suppose that $\Delta$ is independent with respect to the action of $G=S_n$ on the power set of $\Omega$. Then one of the following holds:
 \begin{enumerate}
  \item There exists $\delta \in \Delta$ with $|\delta|\in\{1, n-1\}$.
  \item $|\Delta|\leq n-2$. 
 \end{enumerate}
\end{lem}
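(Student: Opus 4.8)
The plan is to prove the contrapositive of ``(1) or (2)'': assuming that every $\delta\in\Delta$ has $|\delta|\notin\{1,n-1\}$, I will show $|\Delta|\leq n-2$. Two quick reductions come first. Since $\Delta$ is independent it contains neither $\emptyset$ nor $\Omega$ (either of these imposes no constraint in the equivalence relation defining $\mathcal{P}_\Delta$, so could be deleted without changing $G_{(\Delta)}$); together with the failure of (1) this gives $2\leq|\delta|\leq n-2$ for all $\delta\in\Delta$, and hence $n\geq 4$ unless $\Delta=\emptyset$, in which case $|\Delta|=0\leq n-2$. Also, since $\Delta$ is independent, any ordering of its elements yields a strictly decreasing stabilizer chain, so Lemma~\ref{l: i} applied with the group $S_n$, $d=0$ and $r=1$ gives $|\Delta|\leq s-1\leq n-1$, where $s$ is the number of parts of $\mathcal{P}_\Delta$. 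If $|\Delta|\leq n-2$ we are done, so assume $|\Delta|=n-1$; then $s=n$, i.e.\ $\mathcal{P}_\Delta$ is the partition of $\{1,\dots,n\}$ into singletons.

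The main step is to build an auxiliary graph in the spirit of $\Gamma_\Lambda$ from \S\ref{s: k=2}. For each $\delta\in\Delta$, independence gives $\mathcal{P}_{\Delta\setminus\{\delta\}}\neq\mathcal{P}_\Delta$; as $\mathcal{P}_\Delta$ consists of singletons, $\mathcal{P}_{\Delta\setminus\{\delta\}}$ has a part of size at least $2$, hence there is a pair of distinct points $\{i_\delta,j_\delta\}$ that is split by $\delta$ but by no other member of $\Delta$. Let $H$ be the graph on $\{1,\dots,n\}$ whose edge labelled $\delta$ is $\{i_\delta,j_\delta\}$; the pairs $\{i_\delta,j_\delta\}$ are pairwise distinct (two coinciding ones would be split by two different members of $\Delta$), so $H$ has $n$ vertices and $n-1$ edges. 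I would then show $H$ is a tree, for which it suffices to show that $H$ is acyclic: given a cycle $v_0,v_1,\dots,v_m=v_0$ whose consecutive edges carry the (distinct) labels $\delta_{(1)},\dots,\delta_{(m)}$, fix $s$ and observe that $\delta_{(s)}$ splits $v_{t-1}$ and $v_t$ exactly when $t=s$ (each edge of $H$ is split only by its own label); hence the truth value of ``$v_t\in\delta_{(s)}$'' changes exactly once as $t$ runs from $0$ to $m$, contradicting $v_m=v_0$.

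Finally I would use the leaves of $H$. Let $e=\{u,v\}$ be a pendant edge of $H$ with $v$ a leaf, and let $\delta_e$ be its label. For every point $x\neq v$, the unique $x$--$u$ path in $H$ avoids $e$ (since $v$ has degree $1$), and $\delta_e$ splits none of the edges of that path, so $x\in\delta_e\iff u\in\delta_e$; thus membership in $\delta_e$ is constant on $\{1,\dots,n\}\setminus\{v\}$. Since $\delta_e$ splits $u$ and $v$, this forces $\delta_e=\{v\}$ or $\delta_e=\{1,\dots,n\}\setminus\{v\}$, so $|\delta_e|\in\{1,n-1\}$, contrary to assumption. Hence the case $|\Delta|=n-1$ is impossible and $|\Delta|\leq n-2$, as required.

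I expect the only genuinely delicate point to be showing that $H$ is a tree, and within it the bookkeeping behind the single fact that does all the work: each edge of $H$ is split by its own label and by no other element of $\Delta$ — this is what drives both the acyclicity argument and the leaf argument. Everything else is routine; in particular the small cases $n\in\{2,3\}$ need no separate treatment, since the reduction above already forces $\Delta=\emptyset$ there.
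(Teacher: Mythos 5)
Your proof is correct, but it takes a genuinely different route from the paper's. The paper first normalises $\Delta$ by replacing any $\delta$ with $|\delta|>\frac{n}{2}$ by its complement (this preserves independence), so that $1<|\delta|\leq\frac{n}{2}$ throughout, and then finishes in two lines with Lemma~\ref{l: i}: if some two members of $\Delta$ meet, the partition they generate has four parts and $|\Delta|\leq n-2$ follows; if all members are pairwise disjoint, $\mathcal{P}_\Delta$ has at most $n-1$ parts and the same lemma applies. You instead reduce to the extremal case $|\Delta|=n-1$, extract for each $\delta$ a ``critical pair'' split by $\delta$ and by no other member of $\Delta$, show these pairs form a spanning tree, and use a pendant edge to force a member of size $1$ or $n-1$. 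Your route is longer, but it buys something: it treats arbitrary (possibly nested) subsets uniformly --- note that the paper's four-part count tacitly assumes neither of $\delta_1,\delta_2$ contains the other, since in the nested case $\mathcal{P}_{\{\delta_1,\delta_2\}}$ has only three parts and the displayed estimate gives only $n-1$ --- and it identifies exactly which set must have extreme size when the bound fails, extending the graph-theoretic viewpoint of \S\ref{s: k=2} from $2$-sets to general subsets. The paper's proof, in exchange, is much shorter and stays entirely within the partition-counting framework of Lemma~\ref{l: i} that drives the rest of the section.
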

\begin{proof}
Let us suppose that (1) does not hold; we will prove that (2) follows. Note that if $\delta\in \Delta$ with $|\delta|>\frac{n}{2}$, then we can replace $\delta$ with $\Omega\setminus\delta$ and the resulting set will still be independent. Note too that, since $\Delta$ is independent, all sets in $\Delta$ are non-empty. Thus we can assume that $1<|\delta|\leq \frac{n}{2}$ for all $\delta\in\Delta$.

Suppose that there exist $\delta_1, \delta_2\in \Delta$ such that $\delta_1\cap \delta_2\neq\emptyset$. Since $|\delta_1|, |\delta_2|\leq \frac{n}{2}$ this means that $\Omega\setminus(\delta_1\cup\delta_2)\neq \emptyset$. We conclude that $\mathcal{P}_{\{\delta_1, \delta_2\}}$ contains 4 parts. Now the result follows from Lemma~\ref{l: i}.

Suppose, instead, that $\delta_1\cap \delta_2=\emptyset$ for all distinct $\delta_1,\delta_2\in\Delta$. If $|\Delta|\geq 1$, then $\mathcal{P}$ has at most $n-1$ parts and the result follows from Lemma~\ref{l: i}. If $|\Delta|=0$, then the result is true since we assume that $n\geq 2$.
\end{proof}

To improve the upper bound in Lemma~\ref{l: n-2} (2) from $n-2$ to $n-3$ we will need to do quite a bit of work (and we will need to deal with some exceptions). In what follows we set $\Lambda$ to be an independent set in $\Omega_k$ and, to start with at least, we drop the requirement that $n\geq 2k$.

As when $k=2$, it is convenient to think of $\Lambda$ as being the set of hyperedges in a $k$-hypergraph, $\Gamma_\Lambda$, with vertex set $\Omega=\{1,\dots, n\}$. From here on we will write ``edge'' in place of ``hyperedge''. We think of two edges as being \emph{incident} in $\Gamma_\Lambda$ if they intersect non-trivially.  If $\Delta$ is a set of edges in this graph (i.e. $\Delta\subseteq \Lambda$), then the \emph{span} of $\Delta$ is the set of vertices equalling the union of all edges in $\Delta$.

Write $\Gamma_{C_1},\dots, \Gamma_{C_\ell}$ for the connected components of $\Gamma_\Lambda$; in particular $\ell$ is the number of connected components in $\Gamma_\Lambda$. For $\Gamma_{C_i}$, we write $C_i$ for the vertex set and $\Lambda_{C_i}$ for the edge set.

In what follows we repeatedly use the fact that if $\lambda_1,\dots, \lambda_j$ are elements of the independent set $\Lambda$, then we must have
\[
 G\gneq G_{\lambda_1}\gneq G_{\lambda_1, \lambda_2}\gneq \cdots \gneq G_{\lambda_1,\lambda_2,\dots, \lambda_j}.
\]
This in turn means that, for all $i=1,\dots, j-1$, the partition $\mathcal{P}_{\lambda_1,\dots, \lambda_{i+1}}$ has more parts than $\mathcal{P}_{\lambda_1,\dots, \lambda_i}$.

\begin{lem}\label{l: components}\hfill\,
\begin{enumerate}
 \item If $\Gamma_\Lambda$ has a connected component with exactly one edge, then $|\Lambda|\leq n-3$.
 \item If $\ell\geq 3$, then $|\Lambda|\leq n-3$.
 \item Suppose that $\ell=2$, that $|\Gamma_{C_i}|\geq 2$ for $i=1,2$, and that there exist incident edges $E_1, E_2$ in $\Lambda_{C_1}$ such that the span of $\{E_1, E_2\}$ is not equal to $C_1$. Then $|\Lambda| \leq n-3$.
\end{enumerate}
\end{lem}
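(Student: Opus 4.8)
The plan is to handle all three parts through a single reduction to Lemma~\ref{l: i}, applied componentwise. Keeping the paper's notation, let $C_1,\dots,C_\ell$ be the vertex sets of the connected components of $\Gamma_\Lambda$ (some $C_i$ may be single vertices carrying no edge), so that $\{1,\dots,n\}$ is the disjoint union of the $C_i$ and $\Lambda$ is the disjoint union of the edge sets $\Lambda_{C_1},\dots,\Lambda_{C_\ell}$. The basic observation I would establish first is that, for every $i$, $|\Lambda_{C_i}|\leq|C_i|-1$. Indeed $\Lambda_{C_i}\subseteq\Lambda$ is independent with respect to $S_n$ (a subset of an independent set is independent), and, regarding $\Lambda_{C_i}$ as a family of $k$-subsets of $C_i$, its pointwise stabilizer in $S_n$ is the stabilizer of the partition $\mathcal{P}_{\Lambda_{C_i}}$, which inside $\{1,\dots,n\}$ has $\{1,\dots,n\}\setminus C_i$ as one of its parts; hence $G_{(\Lambda_{C_i})}\cong\Sym(C_i)_{(\Lambda_{C_i})}\times\Sym(\{1,\dots,n\}\setminus C_i)$, so $\Lambda_{C_i}$ is also independent with respect to $\Sym(C_i)$. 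Applying Lemma~\ref{l: i} with empty $\Delta$ inside $\Sym(C_i)$ gives $|\Lambda_{C_i}|\leq(\text{number of parts of }\mathcal{P}_{\Lambda_{C_i}})-1\leq|C_i|-1$.

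Part (2) is then immediate: if $\ell\geq 3$ then $|\Lambda|=\sum_{i=1}^{\ell}|\Lambda_{C_i}|\leq\sum_{i=1}^{\ell}(|C_i|-1)=n-\ell\leq n-3$. For part (1), suppose some component has exactly one edge $E$; then that component's vertex set is $E$ itself, so it has size $k$, and $E$ is a single part of $\mathcal{P}_\Lambda$ (no other edge of $\Lambda$ meets $E$). The remaining parts of $\mathcal{P}_\Lambda$ partition the $n-k$ vertices outside $E$, so $\mathcal{P}_\Lambda$ has at most $n-k+1$ parts, and Lemma~\ref{l: i} with empty $\Delta$ gives $|\Lambda|\leq n-k\leq n-3$, using $k\geq 3$.

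Part (3) is the main case. Here $\ell=2$, $|C_1|+|C_2|=n$, and I would work inside $C_1$. Since $\Lambda_{C_1}$ is independent with respect to $\Sym(C_1)$, any ordering of its edges gives an irredundant stabilizer chain; I order them with the incident pair $E_1,E_2$ first and set $\Delta=\{E_1,E_2\}$. The crucial point is that $\mathcal{P}_\Delta$, as a partition of $C_1$, has exactly four parts, namely $E_1\cap E_2$, $E_1\setminus E_2$, $E_2\setminus E_1$ and $C_1\setminus(E_1\cup E_2)$: the first is non-empty because $E_1,E_2$ are incident, the second and third because $E_1\neq E_2$ have the same size $k$, and the fourth precisely because the span of $\{E_1,E_2\}$ is a proper subset of $C_1$ --- this is the only place the extra hypothesis of (3) is used. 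Lemma~\ref{l: i}, applied inside $\Sym(C_1)$ with $d=2$ and $r=4$, then gives $|\Lambda_{C_1}|\leq 2+s-4=s-2\leq|C_1|-2$, where $s$ is the number of parts of $\mathcal{P}_{\Lambda_{C_1}}$. Combining this with $|\Lambda_{C_2}|\leq|C_2|-1$ from the first paragraph, $|\Lambda|=|\Lambda_{C_1}|+|\Lambda_{C_2}|\leq(|C_1|-2)+(|C_2|-1)=n-3$.

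The step I expect to be the real crux is the ``$r=4$'' count in part (3): upgrading the bound $n-2$ of Lemma~\ref{l: n-2} to $n-3$ comes down to the observation that an incident pair whose span misses a vertex of its component yields a partition with four parts rather than three, and one must be careful to check all four Venn regions are non-empty \emph{inside $C_1$} (not inside $\{1,\dots,n\}$). The other slightly delicate point, used throughout, is the direct-product decomposition $G_{(\Lambda_{C_i})}\cong\Sym(C_i)_{(\Lambda_{C_i})}\times\Sym(\{1,\dots,n\}\setminus C_i)$, which is what licenses the componentwise application of Lemma~\ref{l: i}; both of these are routine but genuinely needed.
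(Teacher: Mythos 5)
Your part (1) is correct and coincides with the paper's argument. However, the claim on which your parts (2) and (3) rest --- that each $\Lambda_{C_i}$ is independent with respect to $\Sym(C_i)$ --- is false, and the inference you give for it does not work. Knowing that $G_{(\Lambda_{C_i})}$ decomposes as $\Sym(C_i)_{(\Lambda_{C_i})}\times\Sym(\{1,\dots,n\}\setminus C_i)$ is not enough: to transfer independence you would need the analogous decomposition for $G_{(\Delta)}$ for \emph{every} $\Delta\subseteq\Lambda_{C_i}$, and this fails as soon as $\Delta$ leaves some vertices of $C_i$ uncovered while there are also uncovered vertices outside $C_i$, since then the ``uncovered'' part of $\mathcal{P}_\Delta$ straddles $C_i$ and its complement. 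Concretely, take $n=7$, $k=3$, $\Lambda=\{\{1,2,3\},\{4,5,6\}\}$: this is independent with respect to $S_7$ (removing either edge enlarges the stabilizer from $\Sym(\{1,2,3\})\times\Sym(\{4,5,6\})$ to a group containing $\Sym(\{4,5,6,7\})$), yet $\Lambda_{C_1}=\{\{1,2,3\}\}$ is certainly not independent with respect to $\Sym(\{1,2,3\})$, which stabilizes its only edge vacuously. The failure is not confined to one-edge components: $\{\{1,2,3\},\{3,4,5\},\{3,6,7\}\}$ is independent with respect to $S_n$ for $n\geq 8$, but not with respect to $\Sym(\{1,\dots,7\})$, because deleting $\{3,6,7\}$ leaves the induced partition of $\{1,\dots,7\}$ unchanged. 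So the chain you feed into Lemma~\ref{l: i} inside $\Sym(C_i)$ need not be strict, and your bound $|\Lambda_{C_i}|\leq|C_i|-1$ --- the engine of parts (2) and (3) --- is not established. (Retreating to $S_n$ only gives $|\Lambda_{C_i}|\leq|C_i|$, since $\mathcal{P}_{\Lambda_{C_i}}$ then has the extra part $\{1,\dots,n\}\setminus C_i$.)

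The bound $|\Lambda_{C_i}|\leq|C_i|-1$ is in fact true and the gap is repairable, but by a different mechanism: for a component with at least two edges, pick an incident pair $E_1,E_2$ and apply Lemma~\ref{l: i} \emph{in $S_n$} with $\Delta=\{E_1,E_2\}$; since $n>|E_1\cup E_2|$ the partition $\mathcal{P}_\Delta$ already has four parts, which recovers the lost $1$, while one-edge and edgeless components satisfy the bound trivially. This global use of Lemma~\ref{l: i} with a well-chosen $\Delta$ of two or four edges is exactly how the paper proves (2) and (3), avoiding any passage to $\Sym(C_i)$. The paper only performs the reduction to $\Sym(C_i)$ later, in Lemma~\ref{l: special}, under hypotheses ($\ell=2$ and both components having at least two edges, so that $\Lambda\setminus\Lambda_{C_i}$ covers all of $\{1,\dots,n\}\setminus C_i$) which make the direct-product decomposition valid for all the relevant stabilizers; your part (3) happens to sit inside that safe regime, but your part (2), which sums over arbitrary components including isolated vertices and one-edge components, does not.
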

\begin{proof}
For (1), observe that $\mathcal{P}_\Lambda$ has at most $n-k+1$ parts. Then Lemma~\ref{l: i} yields the result.

We may assume, then, that any connected component of $\Lambda$ either contains at least 2 edges or none. To prove (2) we go through the possibilities:
\begin{itemize}
 \item If there are at least 3 components with no edges, then, $\mathcal{P}_\Lambda$ has at most $n-2$ parts and Lemma~\ref{l: i} yields the result.
 \item Suppose there are 2 components with no edges and at least 1 component, $C_1$, containing 2 edges. Let $E_1, E_2$ be incident edges in $\Lambda_{C_1}$ and observe that $\mathcal{P}_{\{E_1, E_2\}}$ contains 4 parts while $\mathcal{P}_\Lambda$ contains at most $n-1$ parts; Lemma~\ref{l: i} yields the result.
 \item Suppose there are at least 3 components in total and at least 2 components, $C_1$ and $C_2$, containing 2 edges. Let $E_1, E_2$ be incident edges in $\Lambda_{C_1}$, let $F_1, F_2$ be incident edges in $\Lambda_{C_2}$ and observe that $\mathcal{P}_{\{E_1, E_2, F_1, F_2\}}$ contains 7 parts while $\mathcal{P}_\Lambda$ contains at most $n$ parts; Lemma~\ref{l: i} yields the result.
\end{itemize}
We have proved (2). For (3) let $E_1, E_2$ be incident edges in $\Lambda_{C_1}$ for which the span of $\{E_1, E_2\}$ is not equal to $C_1$, let $F_1, F_2$ be incident edges in $\Lambda_{C_2}$. We can see that $\Lambda \setminus (E_1\cup E_2 \cup F_1\cup F_2)$ is non-empty; thus $\mathcal{P}_{\{E_1, E_2, F_1, F_2\}}$ contains 7 parts and the result again follows from Lemma~\ref{l: i}.
\end{proof}

The next result deals with a particular case when $\Lambda$ is connected.

\begin{lem}\label{l: b2}
 If $|\Lambda|\geq 2$ and $\Omega$ is spanned by 2 incident edges, then 
 \[
 |\Lambda|\leq \begin{cases}
  n-1, &\textrm{if } n=k+1; \\
  n-2, &\textrm{if } n>k+1.
 \end{cases}
\]
\end{lem}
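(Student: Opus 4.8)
The plan is to get the bound $|\Lambda|\le n-1$ for free from Lemma~\ref{l: i} and then work to improve it to $n-2$ when $n>k+1$. Write $E_1,E_2\in\Lambda$ for two incident edges with $E_1\cup E_2=\Omega$, and put $A=E_1\setminus E_2$, $B=E_2\setminus E_1$, $I=E_1\cap E_2$; these partition $\Omega$, with $|A|=|B|=n-k$, $|I|=2k-n$, and $\mathcal{P}_{\{E_1,E_2\}}$ has exactly $3$ parts. Listing $E_1,E_2$ first in a stabilizer chain for $\Lambda$ (legitimate since $\Lambda$ is independent) and applying Lemma~\ref{l: i} with $d=2$, $r=3$ and $s$ the number of parts of $\mathcal{P}_\Lambda$ gives $|\Lambda|\le s-1\le n-1$; when $n=k+1$ this is already the claimed bound. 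So from now on I would assume $n>k+1$ (hence $|A|=|B|\ge 2$, $|I|\ge 1$, $k\ge 3$) and suppose, for contradiction, that $|\Lambda|=n-1$; Lemma~\ref{l: i} then forces $s=n$, i.e. $\mathcal{P}_\Lambda$ is the partition into $n$ singletons.

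The heart of the argument is a bookkeeping step on incidence vectors. Record each $x\in\Omega$ by its incidence vector in $\{0,1\}^{\Lambda}$ (coordinates indexed by the edges, say $E_1,E_2,\lambda_3,\dots,\lambda_{n-1}$); the $n$ vectors are distinct. For each $\lambda\in\Lambda$, the pairs $\{x,y\}$ uniquely separated by $\lambda$ are exactly the pairs merged when $\lambda$ is deleted from $\Lambda$, and there is at least one such pair by independence. A pair uniquely separated by $E_1$ agrees on the $E_2$-coordinate, so (as $E_1\cup E_2=\Omega$) it must be an $I$–$B$ pair; likewise any pair uniquely separated by $E_2$ is an $I$–$A$ pair; and for $j\ge 3$, any pair uniquely separated by $\lambda_j$ lies entirely inside one of $A,B,I$. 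Let $\Gamma_A,\Gamma_B,\Gamma_I$ be the graphs on the incidence vectors of $A,B,I$ in which two vectors are joined when they differ in exactly one coordinate, and let $D_A,D_B,D_I\subseteq\{3,\dots,n-1\}$ be the sets of directions they use; by the previous sentence $D_A\cup D_B\cup D_I=\{3,\dots,n-1\}$. Since a connected subgraph of a hypercube on $m$ vertices uses at most $m-1$ coordinate directions (pass to a spanning tree; any other used direction would be an $\mathbb{F}_2$-sum of tree directions), we get $|D_A|+|D_B|+|D_I|\le (|A|-1)+(|B|-1)+(|I|-1)=n-3\le |D_A|+|D_B|+|D_I|$. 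So equality holds everywhere: $D_A,D_B,D_I$ partition $\{3,\dots,n-1\}$, each of $\Gamma_A,\Gamma_B,\Gamma_I$ is connected, and consequently all points of $A$ have the same incidence pattern on the coordinates outside $D_A$, in particular on $D_B\cup D_I$ — and symmetrically for $B$ and $I$.

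With this in hand I would analyse edge sizes. Fix $j\in D_A$ (nonempty as $|A|-1\ge 1$): all of $B$, and all of $I$, is constant on coordinate $j$, so $\lambda_j$ contains or misses $B$, and contains or misses $I$. Using $|\lambda_j|=k=|A|+|I|=|B|+|I|$, three of the four cases force $\lambda_j\in\{E_1,E_2\}$ or $|\lambda_j|<k$, all impossible; the surviving case is $\lambda_j=B\cup X_j$ with $X_j\subseteq A$, $|X_j|=2k-n$, and since $\lambda_j$ separates two points of $A$ we get $1\le 2k-n\le|A|-1=n-k-1$, so $2n\ge 3k+1$. Symmetrically each $j\in D_B$ has $\lambda_j=A\cup Y_j$ with $|Y_j|=2k-n$, and each $j\in D_I$ has $\lambda_j\supseteq A\cup B$ with $|\lambda_j\cap I|=3k-2n$ and $1\le 3k-2n\le|I|-1$, so $2n\le 3k-1$. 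If $|I|\ge 2$ then $D_I\ne\emptyset$ and $2n\ge 3k+1$ together with $2n\le 3k-1$ is a contradiction. If $|I|=1$, i.e. $n=2k-1$, write $I=\{z\}$; every $\lambda_j$ with $j\in D_A$ then contains $B$ and avoids $z$, so for any pair $\{x,y\}$ separated by $E_1=A\cup\{z\}$ (necessarily $x\in A\cup\{z\}$, $y\in B$) there is a second separator — namely $E_2=B\cup\{z\}$ if $x\in A$, or any $\lambda_j$ with $j\in D_A$ if $x=z$. Hence $E_1$ separates no pair uniquely, so deleting $E_1$ does not change $\mathcal{P}_\Lambda$, contradicting independence. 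This gives $|\Lambda|\le n-2$ for $n>k+1$.

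I expect the main obstacle to be exactly the improvement from $n-1$ to $n-2$: ruling out an all-singletons $\mathcal{P}_\Lambda$ with $n-1$ edges. The delicate part is forcing the incidence structure to split \emph{exactly} across $A,B,I$ (the equality case of the hypercube-direction estimate), after which $|\lambda_j|=k$ pins down every edge; and the boundary case $|I|=1$ ($n=2k-1$) resists the clean numerical contradiction and has to be closed by the separate remark that $E_1$ loses all of its privately-separated pairs. I would also want to double-check that the small or degenerate configurations (e.g. $k=3$, or $|A|=2$), where the inequalities $1\le 2k-n\le|A|-1$ are tight, are genuinely covered.
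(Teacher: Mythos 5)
Your proof is correct, but it takes a genuinely different route from the paper's. The paper disposes of this lemma in a few lines by complementation: since $\Omega$ is spanned by two incident edges we have $k>n/2$, so replacing each $\lambda\in\Lambda$ by $\Omega\setminus\lambda$ (which has the same pointwise stabilizer in $\Sym(\Omega)$) yields an independent set of $(n-k)$-subsets; when $n>k+1$ these have size strictly between $1$ and $n-1$, so Lemma~\ref{l: n-2} immediately gives $|\Lambda|\leq n-2$, and when $n=k+1$ the complements are singletons and the bound $n-1$ is trivial. You instead attack the extremal case $|\Lambda|=n-1$ directly: Lemma~\ref{l: i} forces $\mathcal{P}_\Lambda$ to be the partition into singletons, the incidence-vector/hypercube-direction count forces the remaining edges to separate pairs only within $A$, $B$ or $I$ with each of $\Gamma_A,\Gamma_B,\Gamma_I$ connected, the size condition $|\lambda_j|=k$ then pins down every edge, and the incompatible inequalities $2n\geq 3k+1$ and $2n\leq 3k-1$ (together with the redundancy of $E_1$ in the boundary case $|I|=1$, i.e.\ $n=2k-1$) finish the job. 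I have checked the details, including the tight cases you flagged; they all go through. One small presentational point: the direction bound should be stated per component, $|D_X|\leq|X|-c_X$ where $c_X$ is the number of components of $\Gamma_X$, since that refined form is what lets you deduce connectivity of $\Gamma_A,\Gamma_B,\Gamma_I$ from the equality $|D_A|+|D_B|+|D_I|=n-3$; the version you quote for connected subgraphs does not literally apply before connectivity is known, though the fix is immediate. Your argument buys a structural description of near-extremal configurations, but at many times the length; the complementation trick is worth internalising here because the hypothesis that $\Omega$ is spanned by two incident edges is precisely what guarantees the complementary $(n-k)$-sets fall under the already-established Lemma~\ref{l: n-2}.
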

\begin{proof}
Since $\Omega$ is spanned by 2 incident edges, we have $k>|\Omega|/2$. Observe that, for any $\lambda \in \Lambda$, the set $\Omega\setminus \lambda$ is a subset of size $|\Omega|-k$. Since the pointwise stabilizers in $\Sym(\Omega)$ of the subsets $\lambda$ and $\Omega\setminus \lambda$ are equal, we obtain that
\[
 \overline{\Lambda}= \{\Omega\setminus \lambda \mid \lambda \in \Lambda\}
\]
is an independent set with respect to the action of $\Sym(\Omega)$ on $\Omega_j$ where $j=|\Omega|-k$. Since $j<|\Omega|/2<k$ it is clear that $\Omega$ is not spanned by 2 edges in $\overline{\Lambda}$. If $j\geq 1$, then Lemma~\ref{l: n-2} implies that $|\overline{\Lambda}|=|\Lambda|=|\Lambda|\leq n-2$, as required. If $j=1$, then the result is obvious.
\end{proof}

From here on we impose the condition that $n\geq 2k$.

\begin{lem}\label{l: special}
 Suppose that $n\geq 2k$, that $\ell=2$ and that $|\Gamma_{C_i}|\geq 2$ for $i=1,2$. Then
 \[
  |\Lambda|\leq \begin{cases}
                 n-2, &\textrm{if $n=2k+2$}; \\ n-3, & \textrm{otherwise}.
                \end{cases}
 \]
\end{lem}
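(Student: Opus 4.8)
The plan is to treat the two components $\Gamma_{C_1}$ and $\Gamma_{C_2}$ separately, after first using Lemma~\ref{l: components}(3) to force a rigid structure on each of them. Write $n_i=|C_i|$. By Lemma~\ref{l: components}(3), applied to $C_1$ and --- by the symmetry of the hypotheses (here $\ell=2$ and both components have at least two edges) --- to $C_2$ as well, either $|\Lambda|\leq n-3$, and we are done, or every pair of incident edges of $\Lambda_{C_i}$ spans $C_i$, for both $i=1,2$. Assume the latter from now on. Since $\ell=2$ and each component has at least two edges, there are no isolated vertices, so $C_1$ and $C_2$ partition $\{1,\dots,n\}$ and hence $n_1+n_2=n$.

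Next I would extract size constraints on the components. Each $\Gamma_{C_i}$ is connected with at least two edges, so it contains a pair of incident edges, and by the previous paragraph that pair spans $C_i$; thus $C_i$ is the union of two distinct $k$-subsets meeting non-trivially, whence $k+1\leq n_i\leq 2k-1$. In particular $n=n_1+n_2\geq 2k+2$, so the cases $n\in\{2k,2k+1\}$ cannot occur here --- consistent with the claimed bound $n-3$ --- and we may assume $n\geq 2k+2$.

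Now I would pass to the action of $\Sym(C_i)$ on the $k$-subsets of $C_i$. The set $\Lambda_{C_i}$ is a subset of the independent set $\Lambda$, hence independent for $S_n$; and since every vertex of $C_i$ lies in some edge of $\Lambda_{C_i}$ while no such edge meets $\{1,\dots,n\}\setminus C_i$, for any $\Delta\subseteq\Lambda_{C_i}$ the pointwise stabilizer $(S_n)_{(\Delta)}$ is the direct product of $\Sym(\{1,\dots,n\}\setminus C_i)$ with the pointwise stabilizer of $\Delta$ in $\Sym(C_i)$. Therefore $\Lambda_{C_i}$ is independent for $\Sym(C_i)$. Since $C_i$ is spanned by two incident edges and $|\Lambda_{C_i}|\geq 2$, Lemma~\ref{l: b2} gives $|\Lambda_{C_i}|\leq n_i-1$ if $n_i=k+1$ and $|\Lambda_{C_i}|\leq n_i-2$ if $n_i\geq k+2$.

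Finally, summing $|\Lambda|=|\Lambda_{C_1}|+|\Lambda_{C_2}|$ and using $n_1+n_2=n$ with $n_i\geq k+1$: if both $n_i\geq k+2$ then $|\Lambda|\leq n-4$; if exactly one of them, say $n_1$, equals $k+1$ then $|\Lambda|\leq (n_1-1)+(n_2-2)=n-3$; and if both equal $k+1$ then $n=2k+2$ and $|\Lambda|\leq 2k=n-2$. Given $n_i\geq k+1$ and $n_1+n_2=n$, the third case occurs precisely when $n=2k+2$, and the first two give $|\Lambda|\leq n-3$ and force $n\neq 2k+2$; this is exactly the stated bound. The step requiring the most care is the reduction to $\Sym(C_i)$ together with the verification that Lemma~\ref{l: b2} genuinely applies on each component --- in particular that $n_i\neq k$, which is exactly where the hypothesis $|\Gamma_{C_i}|\geq 2$ is used, since a $k$-set has a unique $k$-subset; the remainder is routine arithmetic with the part-counting bounds already established.
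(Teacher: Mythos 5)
Your overall route is the same as the paper's: use Lemma~\ref{l: components}(3) to force every incident pair in $\Lambda_{C_i}$ to span $C_i$, transfer independence of $\Lambda_{C_i}$ to the action of $\Sym(C_i)$, apply Lemma~\ref{l: b2} componentwise, and sum. The concluding arithmetic, and your extra observations $k+1\le|C_i|\le 2k-1$ and hence $n\ge 2k+2$, are all correct. However, the one delicate step --- the transfer of independence --- is justified by a claim that is false as stated. You assert that for \emph{any} $\Delta\subseteq\Lambda_{C_i}$ one has $(S_n)_{(\Delta)}=\Sym(\{1,\dots,n\}\setminus C_i)\times(\Sym(C_i))_{(\Delta)}$. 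This holds only when $\Delta$ spans $C_i$: otherwise the part of $\mathcal{P}_\Delta$ consisting of uncovered vertices straddles $C_i$ and its complement, so $(S_n)_{(\Delta)}$ is strictly larger than the claimed product (take $\Delta$ a single edge, for instance). This is not cosmetic, because independence with respect to $S_n$ does \emph{not} in general imply independence with respect to $\Sym(C_i)$: for example, with $C=\{1,2,3,4\}$ inside $\{1,\dots,6\}$, the set $\{\{1,2\},\{3,4\}\}$ is independent for $S_6$ but not for $\Sym(C)$, precisely because of this merging of the uncovered part with the complement of $C$.

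The gap is repairable, and the paper's repair is worth noting: rather than restricting $\Lambda_{C_i}$ on its own, it first passes to $H:=G_{(\Lambda_{C_1})}$. Since $\Lambda_{C_1}$ spans $C_1$, this group already has the form $H_0\times\Sym(C_2)$ with $H_0\le\Sym(C_1)$, so for every $\Delta\subseteq\Lambda_{C_2}$ one genuinely has $H_{(\Delta)}=H_0\times\Sym(C_2)_{(\Delta)}$; independence of $\Lambda_{C_2}$ for $H$ (which follows from independence of $\Lambda$ for $G$, since $H_{(\Delta)}=G_{(\Lambda_{C_1}\cup\Delta)}$) then passes to $\Sym(C_2)$, and symmetrically for $C_1$. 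Alternatively, within your own setup you could note that $|C_i|\le 2k-1$ forces any two edges of $\Lambda_{C_i}$ to be incident, hence spanning, so the two sets you actually need to compare, namely $\Lambda_{C_i}$ and $\Lambda_{C_i}\setminus\{\lambda\}$, both span $C_i$ whenever $|\Lambda_{C_i}|\ge 3$, and the case $|\Lambda_{C_i}|=2$ can be checked directly (the associated partitions of $C_i$ have $3$ and $2$ parts respectively). Either way, as written this step needs fixing before the appeal to Lemma~\ref{l: b2} is legitimate.
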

\begin{proof}
Item (3) of Lemma~\ref{l: components} yields this result in the case where there exist $i\in\{1,2\}$ and incident edges $E_1, E_2$ in $\Lambda_{C_i}$ such that the span of $\{E_1, E_2\}$ is not equal to $C_i$. Thus we may assume that, for $i=1,2$ and for distinct $E_1, E_2\in \Gamma_{C_i}$, the span of $\{E_1, E_2\}$ is equal to $C_i$.

We claim that for each $i=1,2$, the set $\Lambda_{C_i}$ is independent with respect to the action of $\Sym(C_i)$ on $C_i$. To see this observe that $\Lambda=\Lambda_{C_1}\cup \Lambda_{C_2}$ and that, by definition, $\Lambda_{C_2}$ must be an independent set for $H:=G_{(\Lambda_{C_1})}$. But $H=H_0\times \Sym(C_2)$ where $H_0<\Sym(C_1)$. Now if $\Delta\subseteq \Lambda_{C_2}$, then $H_{(\Delta)} = H_0\times \Sym(C_1)_{(\Delta)}$. In particular, if $\Delta_1, \Delta_2\subseteq \Lambda_{C_2}$, then $H_{(\Delta_1)}=H_{(\Delta_2)}$ if and only if $\Sym(C_2)_{(\Delta_1)}=\Sym(C_2)_{(\Delta_2)}$. This implies immediately that $\Lambda_{C_2}$ is independent with respect to the action of $\Sym(C_2)$ on $C_2$, and the same argument works for $C_1$.

Now we apply Lemma~\ref{l: b2} to these two actions. We conclude that, for each $i=1,2$, either $|C_i|=k+1$ and $|\Lambda_{C_i}|\leq |C_i|-1$, or else $|\Lambda_{C_i}|\leq |C_i|-2$. The result now follows from the fact that $|\Lambda|=|\Lambda_{C_1}|+|\Lambda_{C_2}|$ and $n=|C_1|+|C_2|$.
\end{proof}

Notice that Lemma~\ref{l: special} attends to the strange appearance of ``$n=2k+2$'' in the statement of item (2) of Theorem~\ref{t: main}. Before we prove Theorem~\ref{t: main} we need one more lemma.

\begin{lem}\label{l: final}
Suppose that $n\geq 2k$. Suppose that either $\Lambda$ is connected, or else it has two connected components, exactly one of which is a single isolated point. Then $|\Lambda|\leq n-3$.
\end{lem}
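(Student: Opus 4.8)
\smallskip
The plan is to reduce everything, as far as possible, to Lemma~\ref{l: i}. The basic mechanism is this: if we can find a subset $\Delta\subseteq\Lambda$ for which $\mathcal{P}_\Delta$ has at least $|\Delta|+3$ parts, then, ordering the elements of $\Lambda$ so that those of $\Delta$ come first — and using that, $\Lambda$ being independent, every such ordering yields a strictly decreasing chain of pointwise stabilizers — Lemma~\ref{l: i} gives $|\Lambda|\le |\Delta|+(\#\,\mathrm{parts}\ \mathcal{P}_\Lambda)-(\#\,\mathrm{parts}\ \mathcal{P}_\Delta)\le |\Delta|+n-(|\Delta|+3)=n-3$. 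So the bulk of the work is to produce such a $\Delta$, or to dispose of the configurations in which no such $\Delta$ exists.

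First I would treat the case in which $\Gamma_\Lambda$ has an isolated vertex $v$. Then every edge of $\Lambda$ lies in the other component $C_1$, a set of size $n-1$, and — exactly as in the proof of Lemma~\ref{l: special} — $\Lambda$ is independent for the action of $\Sym(C_1)$ on the subsets of $C_1$. Since $k\le n/2$ and $n\ge 2k\ge 6$, no edge of $\Lambda$ has size $1$ or $(n-1)-1=n-2$; hence Lemma~\ref{l: n-2}, applied with $C_1$ in place of $\Omega$, forces $|\Lambda|\le(n-1)-2=n-3$. (If $\Lambda=\emptyset$ the claim is trivial.)

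From now on $\Lambda$ is connected; having no isolated vertices it spans $\Omega$, and we may assume $|\Lambda|\ge 2$. Fix a connectivity-respecting ordering $\lambda_1,\dots,\lambda_m$ of $\Lambda$, i.e. one with each $\lambda_{i+1}$ meeting $\lambda_1\cup\dots\cup\lambda_i$. Then $\lambda_1,\lambda_2$ are incident; since $|\lambda_1|=|\lambda_2|=k\le n/2$ and $\lambda_1\ne\lambda_2$, all four of $\lambda_1\cap\lambda_2$, $\lambda_1\setminus\lambda_2$, $\lambda_2\setminus\lambda_1$ and $\Omega\setminus(\lambda_1\cup\lambda_2)$ are non-empty, so $\mathcal{P}_{\{\lambda_1,\lambda_2\}}$ has exactly $4$ parts. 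As each subsequent edge strictly refines the partition, $n\ge(\#\,\mathrm{parts}\ \mathcal{P}_\Lambda)\ge 4+(m-2)$, so $|\Lambda|\le n-2$ in all cases; moreover $|\Lambda|\le n-3$ as soon as $\mathcal{P}_\Lambda$ has at most $n-1$ parts, i.e. as soon as $\Lambda$ is not a base. So we are reduced to the case in which $\Lambda$ is a connected spanning (minimal) base with $|\Lambda|=n-2$.

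It remains to rule this case out; this is the main obstacle. First note that here $\mathcal{P}_\Delta$ has at most $|\Delta|+2$ parts for every $\Delta\subseteq\Lambda$ (otherwise the mechanism of the first paragraph gives $|\Lambda|\le n-3$). Applied to $\Delta=\{\delta_1,\delta_2,\delta_3\}$ with $\delta_1,\delta_2$ incident, this says $\delta_3$ splits at most one of the four parts of $\mathcal{P}_{\{\delta_1,\delta_2\}}$; equivalently every edge of $\Lambda$ is a union of whole parts of $\mathcal{P}_{\{\delta_1,\delta_2\}}$ together with at most a proper subset of one further part — and this holds for \emph{every} incident pair $\delta_1,\delta_2$ of $\Lambda$. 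Writing $\alpha=|\delta_1\cap\delta_2|$, the four parts have sizes $\alpha$, $k-\alpha$, $k-\alpha$, $n-2k+\alpha$, and, $\Lambda$ being a base, each of size $\ge 2$ must be shattered into singletons by edges cutting inside it. The point is that an edge $\mu$ cutting inside a part $P$ is, away from $P$, a union of whole (sub‑)parts, so $|\mu|=k$ forces $|\mu\cap P|$ to lie in an explicit short list of subset‑sum values; a direct computation (using $n\ge 2k$) shows that for various ranges of $\alpha$ one of the four parts admits \emph{no} admissible cutting edge at all, a contradiction unless that part is a singleton — and this eliminates most values of $\alpha$. In the few surviving cases (notably $\alpha\in\{1,k-1\}$ when $n=2k$) the shattering edges are pinned down to an explicit list, and one then verifies that the resulting hypergraph always contains a redundant edge (typically the edge $\delta_2$ producing the jump to four parts), so $\Lambda$ fails to be minimal. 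Carrying this out rigorously requires a finite case analysis on $\alpha$, with $k=3$ and the values $n\in\{2k,2k+2\}$ needing separate attention; this bookkeeping is the technical heart of the argument, while the reductions above and the appeal to Lemma~\ref{l: i} are routine.
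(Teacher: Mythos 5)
Your reductions track the paper's own proof closely and are sound up to a point: using Lemma~\ref{l: i} you correctly reduce to the case where $\Lambda$ is a connected, spanning, minimal base of size $n-2$ in which every further edge refines exactly one of the four parts $R=E_1\cap E_2$, $S$, $T$, $U=\Omega\setminus(E_1\cup E_2)$ determined by an incident pair. However, there are two genuine gaps. The smaller one is in your isolated-vertex case: the implication ``$\Lambda$ independent for $S_n$ on $\Omega_k$ implies $\Lambda$ independent for $\Sym(C_1)$ on subsets of $C_1$'' is false, because the isolated vertex $v$ sits in the ``uncovered'' part of every $\mathcal{P}_\Delta$ and can merge with vertices of $C_1$ that become uncovered when an edge is deleted. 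Concretely, take $n=7$, $k=3$, $v=7$ and $\Lambda=\{\{1,2,3\},\{3,4,5\},\{1,2,6\}\}$: this is independent for $S_7$ (deleting $\{1,2,6\}$ fuses the parts $\{6\}$ and $\{7\}$, enlarging the stabilizer), but it is not independent for $\Sym(\{1,\dots,6\})$, since there the deletion changes nothing. So Lemma~\ref{l: n-2} cannot be invoked on $C_1$ as you do; this is exactly why Lemma~\ref{l: special} insists that both components carry at least two edges, and why the isolated-vertex case is instead folded into the connected case in Lemma~\ref{l: final}.

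The larger gap is that the heart of the lemma --- ruling out a connected spanning minimal base of size $n-2$ --- is announced rather than proved. You assert that ``for various ranges of $\alpha$ one of the four parts admits no admissible cutting edge'' and that ``in the few surviving cases \dots\ one then verifies that the resulting hypergraph always contains a redundant edge,'' but neither claim is established, and the first is not in general how the contradiction arises: for many values of $x=|R|$ and $n$ admissible splitting edges exist for every non-singleton part, so non-existence alone cannot close the argument. What is actually needed (and what the paper does) is a stabilizer computation: if some incident pair has $|R|>1$, pick $E_3\in\Lambda_R$ and $E_4\in\Lambda_U$, enumerate their very constrained forms, and show in every case that any $g\in G_{E_1,E_3,E_4}$ (or $G_{E_2,E_3,E_4}$) already stabilizes two of $S,T,U$, hence $R$, hence $E_2=R\cup T$ (or $E_1$), so that edge is redundant; if instead all pairwise intersections have size at most $1$, then edges of $\Lambda_S$ and $\Lambda_T$ must both contain $U$, forcing $|U|=1$ and then $k=2$, a contradiction. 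This is short but genuinely nontrivial, and your proposal stops precisely where it is required.
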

\begin{proof}
Note that the supposition, along with the fact that $n\geq 2k$, implies that $\Omega$ contains 2 incident edges and $\Omega$ is not spanned by 2 edges. Consider $E_1, E_2$, a pair of incident edges in $\Lambda$. Let $\Pi=\{E_1, E_2\}$ and observe that the parts of $\mathcal{P}_\Pi$ are
\begin{equation}\label{rstu}
 R:=E_1\cap E_2, \,\, S:=E_1\setminus(E_1\cap E_2), \,\, T:=E_2\setminus(E_1\cap E_2) \,\, \textrm{ and } U:=\Omega\setminus (E_1\cup E_2);
\end{equation}
in particular $|\mathcal{P}_\pi|=4$.

If $i$ and $j$ are distinct elements of $\Omega$ that are unsplit by any element of $\Lambda$, then $\mathcal{P}_{\Lambda}$ has at most $n-1$ parts. Then Lemma~\ref{l: i} implies that $|\Lambda|\leq n-3$ as required. Thus we assume that all distinct elements of $\Omega$ are split by an element of $\Lambda$. 

These observations imply that we can write
\begin{equation}\label{xx}
 \Lambda=\{E_1,E_2\}\cup \Lambda_R\cup \Lambda_S\cup \Lambda_T\cup \Lambda_U,
\end{equation}
where, for $X\in\{R,S,T,U\}$, $\Lambda_X$ is the set of elements in $\Lambda$ that split pairs of distinct elements in $X$.

If there exists $E_3\in \Lambda$ such that $\mathcal{P}_{\{E_1, E_2, E_3\}}$  contains $6$ parts, then Lemma~\ref{l: i} implies that $|\Lambda|=n-3$ and we are done. Thus we assume that $\mathcal{P}_{\{E_1, E_2, E_3\}}$ contains 5 parts for all choices of $E_3\in\Lambda\setminus\{E_1,E_2\}$. In particular if $E_3\in \Lambda_X$, then $E_3$ does not split any pairs of elements in $\Lambda_Y$ for $Y\in\{R,S,T,U\}\setminus X$. This means, first, that if $E_3\cap Y\neq \emptyset$ for some $Y\in\{R,S,T,U\}\setminus X$, then $E_3\supset Y$; it means, second, that the sets $\Lambda_R, \Lambda_S, \Lambda_T$ and $\Lambda_U$ are pairwise disjoint.

Set $x:=|R|$, so $|S|=|T|=k-x$. Observe that, since $n\geq 2k$ we must have $|U|\geq x$. We split into two cases and we will show that our assumptions to this point lead to a contradiction.
\medskip

\textsc{1. Suppose that we can choose $E_1, E_2$ so that $1<x$.} This means, in particular that both $R$ and $U$ have cardinality at least $2$; hence $\Lambda_R$ and $\Lambda_U$ are all non-empty.

Let $E_3\in \Lambda_R$. By counting we must have
\[
 E_3=(E_3\cap R)\cup U \,\,\textrm{  or  }\,\, (E_3\cap R)\cup S \cup T.
\]
Let $E_4\in \Lambda_U$. By counting we must have
\[
 E_4=(E_4\cap U)\cup R \
 \,\,\textrm{  or  }\,\, (E_4\cap U)\cup S
 \,\,\textrm{  or  }\,\, (E_4\cap U)\cup T
 \,\,\textrm{  or  }\,\, (E_4\cap U)\cup S \cup T.
\]
We will go through the various combinations and show that, in every case, the set $\{E_1, E_2, E_3, E_4\}$ is not independent, thereby giving our contradiction. In what follows $g\in G_{E_1, E_3, E_4}$.

Consider, first, the possibilities for $E_3$.
\begin{enumerate}
 \item[(E3A)] Suppose that $E_3=(E_3\cap R)\cup U$. Then $g$ stabilizes $(E_1\cup E_3)^C=T$ and $E_3\setminus (E_1\cap E_3)=U$.
 \item[(E3B)] Suppose that $E_3=(E_3\cap R) \cup S \cup T$. Then $g$ stabilizes $E_3\setminus (E_1\cap E_3)=T$ and $(E_1\cup E_3)^C=U$.
  \end{enumerate}
 Thus, in all cases, $g$ stabilizes both $T$ and $U$. Now consider the possibilities for $E_4$.
 
 \begin{enumerate}
  \item[(E4A)] Suppose that $E_4=(E_4\cap U) \cup R$. Then $g$ stabilizes $E_1\cap E_4=R$ and hence also $R\cup T=E_2$. This contradicts independence (the pointwise stabilizer of $\{E_1, E_3, E_4\}$ is equal to the pointwise stabilizer of $\{E_1, E_2, E_3, E_4\}$).
  \item[(E4B)] Suppose that $E_4=(E_4\cap U) \cup S$. Then $g$ stabilizes $E_1\cap E_4=S$, hence also $\Omega \setminus (S\cup T \cup U)=R$, hence also $R\cup T=E_2$. We have the same contradiction.
  \item[(E4C)] Suppose that $E_4=(E_4\cap U) \cup S\cup T$. Then $g$ stabilizes $E_1\cap E_4=S$, hence also $\Omega \setminus (S\cup T \cup U)=R$, hence also $R\cup T=E_2$. We have the same contradiction.
  \item[(E4D)] Suppose that $E_4=(E_4\cap U) \cup T$. For this final case we swap $g$ with an element $h\in G_{E_2, E_3, E_4}$, we swap $S$ with $T$ and we swap $E_1$ with $E_2$. Now, with these changes, the arguments for (E3A) and (E3B) tell us that $h$ stabilizes both $S$ and $U$. Next the argument for (E4B) tells us that $h$ stabilizes $R$, hence also $R\cup S=E_1$. Now we again have a contradiction (the pointwise stabilizer of $\{E_2, E_3, E_4\}$ is equal to the pointwise stabilizer of $\{E_1, E_2, E_3, E_4\}$).
 \end{enumerate}
\medskip

\textsc{2. Suppose that $|E_1\cap E_2|\in\{0,1\}$ for all distinct $E_1, E_2\in \Lambda$.} This is the remaining case. We fix an incident pair $E_1$ and $E_2$ and observe that $\Lambda_S$ and $\Lambda_T$ are non-empty. Let $E_3\in \Lambda_S, E_4\in\Lambda_T$; observe that $E_3=(E_3\cap S)\cup U$ and $E_4=(E_4\cap T)\cup U$. But then $|U|\leq |E_3\cap E_4|\leq 1$, hence $|U|=1$. This implies that
\[
 |E_3|= |E_3\cap S|+|U|=|E_3\cap E_1|+|U|=1+1=2.
\]
Thus $k=2$, a contradiction. 
\end{proof}

\begin{proof}[Proof of Theorem~\ref{t: main} (2) for $k\geq 3$]
 The work in \S\ref{s: lower} implies that we need only prove an upper bound for $\Height(S_n, \Omega_k)$. Lemma~\ref{l: components} (2) yields the result if $\ell\geq 3$. Lemma~\ref{l: final} yields the result if $\ell=1$. Assume, then, that $\ell=2$. Lemma~\ref{l: special} yields the result if each component contains at least 2 edges. Lemma~\ref{l: components} (1) yields the result if there is a component with 1 edge. Lemma~\ref{l: final} yields the result if exactly one of the components has 0 edges. Finally if both components have 0 edges, then the fact that $n\geq 2k$ implies the result.
\end{proof}

\section{The alternating group}

One naturally wonders to what extent the results given here extend to the action of $A_n$ on $k$-sets. Throughout this section $k$ and $n$ will be positive integers with $k\leq \frac{n}{2}$. 

For irredundant bases we can adjust the proof given in \S\ref{s:i}, making use of the following easy fact: suppose that $\mathcal{P}_i$ and $\mathcal{P}_j$ are partitions corresponding to a set of $k$-subsets in $\{1,\dots, n\}$ as described at the start of \S\ref{s: proof}. Let $H_i$ (resp. $H_j$) be the stabilizer in $A_n$ of all parts of $\mathcal{P}_i$ (resp. $\mathcal{P}_j$). If $H_i=H_j$, then either $\mathcal{P}_i=\mathcal{P}_j$ or else the two partitions are of type $1^n$ or $1^{n-2}2^1$.

Let us show how this observation yields the required result.
\begin{prop}\label{p: I An}
\[
 \Irred(A_n, \Omega_k)=\begin{cases}
                                n-2, & \textrm{if $\gcd(n,k)=1$}; \\
                                \max(2,n-3), & \textrm{otherwise}.
                               \end{cases}
\]
\end{prop}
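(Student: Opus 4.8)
The plan is to re-run the argument of \S\ref{s:i} with $A_n$ in place of $S_n$, using the observation recorded just above: a strictly refining chain of partitions of $\{1,\dots,n\}$ still yields a \emph{strictly} decreasing chain of pointwise stabilizers in $A_n$, except that two consecutive partitions can have the same $A_n$-stabilizer exactly when the first is of type $1^{n-2}2^1$ and the second of type $1^n$. I will use two facts throughout. First, if $[\omega_1,\dots,\omega_e]$ gives an irredundant stabilizer chain for $A_n$ and $\mathcal{P}_i$ is the partition associated with $\{\omega_1,\dots,\omega_i\}$, then, writing $q_i$ for the number of parts of $\mathcal{P}_i$, we have $q_1=2$ and $q_{i+1}\geq q_i+1$ for every $i$. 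Second, $(A_n)_{(\Delta)}=\{1\}$ if and only if $\mathcal{P}_\Delta$ is of type $1^n$ or $1^{n-2}2^1$, these being the only partitions of $\{1,\dots,n\}$ whose $S_n$-stabilizer meets $A_n$ trivially.

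I first prove the universal bound $\Irred(A_n,\Omega_k)\leq n-2$, which already settles the case $\gcd(n,k)=1$ from above. Let $[\omega_1,\dots,\omega_e]$ be an irredundant base; the second fact forces $q_e\in\{n-1,n\}$, and since each jump is at least $1$ we have $e\leq q_e-1$. If $q_e=n-1$ then $e\leq n-2$. If $q_e=n$, then $\mathcal{P}_{e-1}$ cannot have $n-1$ parts --- otherwise $\mathcal{P}_{e-1}$ is of type $1^{n-2}2^1$ and $\mathcal{P}_e$ of type $1^n$, making the final inclusion $(A_n)_{\mathcal{P}_{e-1}}\gneq(A_n)_{\mathcal{P}_e}$ fail --- so $q_{e-1}\leq n-2$, whence $e\leq q_{e-1}\leq n-2$. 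For the lower bound when $\gcd(n,k)=1$, I take the chain of length $n-1$ built in \S\ref{s:i}, for which $q_i=i+1$ at every stage, and truncate it to $[\omega_1,\dots,\omega_{n-2}]$. Then $\mathcal{P}_{n-2}$ has $n-1$ parts, so is of type $1^{n-2}2^1$ with trivial stabilizer; and since among $\mathcal{P}_1,\dots,\mathcal{P}_{n-2}$ only the last has as many as $n-1$ parts, no two consecutive partitions are simultaneously of special type, so every inclusion in the $A_n$-chain is strict. This is an irredundant base of length $n-2$.

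Now suppose $\gcd(n,k)=g>1$; then $n$ is composite, so $n=4$ or $n\geq 6$. If $n=4$ then $k=2$ and $\Irred(A_4,\Omega_2)=2=\max(2,n-3)$: the upper bound is the universal $n-2=2$, and the lower bound holds because the action is faithful and non-regular. Assume $n\geq 6$. For the upper bound, suppose for contradiction there is an irredundant base of length $n-2$. Tracking $q_i$ as before, either $q_i=i+1$ for all $i\leq n-2$ (so $q_{n-2}=n-1$), or $q_i=i+1$ for all $i\leq n-3$ (with $q_{n-3}=n-2$ and $q_{n-2}=n$). In the first case Lemma~\ref{l: j}, applied to $\omega_1,\dots,\omega_{n-2}$, forces every part of $\mathcal{P}_{n-2}$ to have size divisible by $g$; in the second it forces the same for $\mathcal{P}_{n-3}$. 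But a partition of $n$ into $n-1$ (respectively $n-2$) parts necessarily has a part of size $1$ --- in the second case at least $n-4\geq 1$ of its parts are singletons --- contradicting $g>1$. Hence $\Irred(A_n,\Omega_k)\leq n-3$. For the matching lower bound I take the irredundant base of length $n-2$ for $S_n$ exhibited in \S\ref{s:i}, whose associated partitions satisfy $q_1=2$ and $q_i=i+2$ for $2\leq i\leq n-2$, and truncate it to $[\omega_1,\dots,\omega_{n-3}]$; then $\mathcal{P}_{n-3}$ has $n-1$ parts, so is of type $1^{n-2}2^1$ with trivial stabilizer, and again only the last of $\mathcal{P}_1,\dots,\mathcal{P}_{n-3}$ has $n-1$ parts, so the $A_n$-chain is irredundant, of length $n-3=\max(2,n-3)$.

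The difficulties are logistical rather than conceptual. The one point that genuinely needs care is the ``$1^{n-2}2^1$ versus $1^n$'' coincidence: in each truncation one must check that no two consecutive associated partitions are of these two special types, since this is exactly what keeps the $A_n$-stabilizer chain strictly decreasing, and it is also exactly why one truncates to length $n-2$ (respectively $n-3$) rather than $n-1$ (respectively $n-2$). The other thing to watch is the exceptional value $n=4$, where the ``part of size $1$'' obstruction used in the $g>1$ upper bound disappears (the partition $2^2$ of $4$ has no singleton) and the answer is $2$ rather than $n-3$.
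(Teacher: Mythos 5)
Your proof is correct and follows essentially the same route as the paper: the same characterisation of when two partitions have equal $A_n$-stabilizers (the $1^n$ versus $1^{n-2}2^1$ coincidence), the same part-counting bound, and Lemma~\ref{l: j} with the singleton obstruction for $\gcd(n,k)>1$. The only cosmetic difference is that you obtain the lower bounds by explicitly truncating the two constructions of \S\ref{s:i} and re-verifying strictness, where the paper argues once and for all that dropping the last entry of any $S_n$-irredundant base yields an $A_n$-irredundant base.
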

\begin{proof}
Let $G=S_n$ and suppose that
\[G \gneq G_{\omega_1} \gneq G_{\omega_1, \omega_2}\gneq 
  G_{\omega_1, \omega_2, \omega_3} \gneq \dots \gneq
  G_{\omega_1, \omega_2, \dots, \omega_e}
\]
is a stabilizer chain corresponding to an irredundant base $[\omega_1,\dots, \omega_e]$. The observation implies that we have
\[A_n \gneq G_{\omega_1}\cap A_n \gneq G_{\omega_1, \omega_2}\cap A_n \gneq 
  G_{\omega_1, \omega_2, \omega_3} \cap A_n \gneq \dots \gneq
  G_{\omega_1, \omega_2, \dots, \omega_{e-1}}\cap A_n \geq G_{\omega_1, \omega_2, \dots, \omega_{e}}\cap A_n
\]
and hence either $[\omega_1,\dots, \omega_e]$ or $[\omega_1,\dots, \omega_{e-1}]$ is an irredundant base for $A_n$. This implies that $\Irred(A_n, \Omega_k)= \Irred(S_n, \Omega_k)-1$ and Theorem~\ref{t: main} implies that 
\[
 \Irred(A_n, \Omega_k)\geq\begin{cases}
                                n-2, & \textrm{if $\gcd(n,k)=1$}; \\
                                n-3, & \textrm{otherwise}.
                               \end{cases}
\]
Now we will give an upper bound for $\Irred(A_n,\Omega_k)$. Let $[\omega_1,\dots, \omega_e]$ be an irredundant base for the action of $A_n$ on $\Omega_k$. Then the observation above implies that $\mathcal{P}_{\{\omega_1,\dots, \omega_{e-1}\}}$ contains at most $n-2$ parts. Applying Lemma~\ref{l: i} with $\Lambda=\{\omega_1,\dots, \omega_{e-1}\}$ and $\Delta=\emptyset$ implies that $e-1=|\Lambda|\leq n-3$ and so $e\leq n-2$. This yields the result when $\gcd(n,k)=1$.

Suppose now that $\gcd(n,k)=g>1$ and that $[\omega_1,\dots, \omega_{n-2}]$ is an irredundant base for the action of $A_n$ on $\Omega_k$; we must show that then $n=4$. The observation above implies that $\mathcal{P}_{\{\omega_1,\dots, \omega_{i+1}\}}$ has exactly one more part than $\mathcal{P}_{\{\omega_1,\dots, \omega_{i}\}}$ for $i=1,\dots, n-4$ and $\mathcal{P}_{\{\omega_1,\dots, \omega_{n-2}\}}$ has exactly two more parts than $\mathcal{P}_{\{\omega_1,\dots, \omega_{n-3}\}}$. Lemma~\ref{l: j} implies that all parts of $\mathcal{P}_{\{\omega_1,\dots, \omega_{n-3}\}}$ are divisible by $g$. But the type of $\mathcal{P}_{\{\omega_1,\dots, \omega_{n-3}\}}$ is either $2^21^{n-4}$ or $3^11^{n-3}$ and we conclude that $(n,k)=(4,2)$ as required. The proof is completed by observing that $\Irred(A_4,\Omega_2)=2$.
\end{proof}

For the other statistics in question it is easy to pin the value down within an error of 1; the next result does this.

\begin{prop}\label{p: d}
Suppose that $k$ and $n$ are positive integers with $k \leq \frac{n}{2}$. Then
\begin{equation}\label{e: 4}\Height(S_n, \Omega_k)-1\leq \Base(A_n, \Omega_k)\leq \Height(A_n, \Omega_k)\leq \Height(S_n, \Omega_k).\end{equation}
\end{prop}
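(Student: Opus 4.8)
The plan is to treat the four-term chain \eqref{e: 4} one inequality at a time, working from the right; only the leftmost inequality carries real content. The middle inequality $\Base(A_n,\Omega_k)\le\Height(A_n,\Omega_k)$ is just \eqref{basic} applied to the action of $A_n$ on $\Omega_k$. For $\Height(A_n,\Omega_k)\le\Height(S_n,\Omega_k)$ I would argue that any subset of $\Omega_k$ that is independent for $A_n$ is independent for $S_n$: by the characterisation of independence recorded in Section~\ref{s: defs}, if $\Lambda$ fails to be independent for $S_n$ then $(S_n)_{(\Lambda)}=(S_n)_{(\Lambda\setminus\{\omega\})}$ for some $\omega\in\Lambda$, and intersecting both sides with $A_n$ shows that $\Lambda$ fails to be independent for $A_n$; taking a largest independent set for $A_n$ then gives the inequality.

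For the leftmost inequality, set $H=\Height(S_n,\Omega_k)$. By Theorem~\ref{t: main} we have $\Base(S_n,\Omega_k)=H$, so I would fix a minimal base $\Lambda$ for $S_n$ on $\Omega_k$ with $|\Lambda|=H$. A base is minimal precisely when it is independent (for a base $B$, independence says exactly that no single deletion from $B$ is again a base), so $\Lambda$ is independent for $S_n$ and its associated partition $\mathcal{P}_\Lambda$ has $n$ singleton parts. Since $(A_n)_{(\Lambda)}=(S_n)_{(\Lambda)}\cap A_n=\{1\}$, the set $\Lambda$ is also a base for $A_n$; let $\Lambda'\subseteq\Lambda$ be a minimal base for $A_n$. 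It then suffices to show $|\Lambda\setminus\Lambda'|\le 1$, since this gives $\Base(A_n,\Omega_k)\ge|\Lambda'|\ge H-1$.

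To see this, suppose for a contradiction that $\Lambda\setminus\Lambda'$ contained two distinct sets $\omega_i,\omega_j$. Then $\Lambda\setminus\{\omega_i,\omega_j\}\supseteq\Lambda'$ would be a base for $A_n$, so by the observation recorded at the start of this section (the stabiliser in $A_n$ of the parts of a partition is trivial only when that partition has type $1^n$ or $1^{n-2}2^1$) the partition $\mathcal{P}_{\Lambda\setminus\{\omega_i,\omega_j\}}$ would have at least $n-1$ parts. On the other hand, $\Lambda$ is independent for $S_n$ and so are all of its subsets, which means that deleting one set from any subset of $\Lambda$ strictly coarsens the associated partition and hence strictly decreases its number of parts; applying this twice, starting from $\mathcal{P}_\Lambda$ with its $n$ parts, forces $\mathcal{P}_{\Lambda\setminus\{\omega_i,\omega_j\}}$ to have at most $n-2$ parts --- a contradiction. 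I expect this final step --- pitting the ``$\ge n-1$ parts'' lower bound coming from triviality of the $A_n$-stabiliser against the ``$\le n-2$ parts'' upper bound coming from $S_n$-independence --- to be the crux of the argument; everything else is routine manipulation of pointwise stabilisers and their associated Young subgroups.
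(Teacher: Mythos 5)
Your proof is correct, and for the one substantive inequality it takes a genuinely different route from the paper. The middle and right-hand inequalities are handled exactly as in the paper (via \eqref{basic} and the observation that intersecting pointwise stabilizers with $A_n$ preserves failures of independence). For $\Height(S_n,\Omega_k)-1\leq\Base(A_n,\Omega_k)$, the paper argues constructively: it excises the final set from the explicit minimal bases \eqref{e: 1} and \eqref{e: 2} built in \S\ref{s: lower} and asserts that what remains is a minimal base for $A_n$. You instead argue abstractly: starting from an \emph{arbitrary} minimal base $\Lambda$ for $S_n$ of maximum size $H=\Base(S_n,\Omega_k)=\Height(S_n,\Omega_k)$ (this is where you invoke Theorem~\ref{t: main}(2), legitimately, since it is proved before this point), you extract a minimal base $\Lambda'\subseteq\Lambda$ for $A_n$ and show $|\Lambda\setminus\Lambda'|\leq 1$ by playing the lower bound ``$\geq n-1$ parts'' (from triviality of the $A_n$-stabilizer of the parts, i.e.\ type $1^n$ or $1^{n-2}2^1$) against the upper bound ``$\leq n-2$ parts'' (from $S_n$-independence of subsets of $\Lambda$, each deletion strictly decreasing the part count from the $n$ singletons of $\mathcal{P}_\Lambda$). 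Each step checks out: minimal bases are exactly independent bases, subsets of independent sets are independent, and Young subgroups determine their partitions as orbit partitions, so strict containment of stabilizers forces a strict drop in the number of parts. Your version buys uniformity in $k$ (the paper's excision argument as written only directly addresses $k\geq 3$, where \eqref{e: 1} and \eqref{e: 2} live) and avoids having to verify that the excised explicit sets really are minimal bases for $A_n$; the cost is the reliance on the equality $\Base(S_n,\Omega_k)=\Height(S_n,\Omega_k)$, which the paper's construction does not need.
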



\begin{proof}
The first inequality is obtained by observing that if we excise the final set from \eqref{e: 1} and \eqref{e: 2}, then we obtain a minimal base for $A_n$.

The second inequality is elementary; it was given in \eqref{basic}. The third inequality, likewise, is an easy consequence of the definition of height.
\end{proof}

All that remains, then, is to establish which of the two possible values holds for each value of $k$ and $n$. Let us consider the situation for small values of $k$:
\begin{enumerate}
 \item[($k=1$)] It is immediate that $\Base(A_n, \Omega_1)= \Height(A_n, \Omega_1)=n-2$, the smaller of the two possible values.
 \item[($k=2$)] We claim that in this case
 \[\Base(A_n, \Omega_2)= \Height(A_n, \Omega_2)=\begin{cases}
                                                n-3, &\textrm{if $n\neq 4$;} \\ 2,& \textrm{if $n=4$.}
                                               \end{cases}\]
Thus, provided $n\neq4$, we again obtain the smaller of the two possible values. To justify our claim we note first that the value for $n=4$ is easy to obtain. When $n> 4$ it is sufficient to prove that $\Height(A_n, \Omega_2)\leq n-3$. To see this we let $\Lambda$ be an independent set and we form the graph $\Gamma_\Lambda$ as in \S\ref{s: k=2}.  Lemma~\ref{l: ind} implies that, since $\Lambda$ is independent, the graph $\Gamma_\Lambda$ is a forest. If this forest has 3 or more connected components, then the result follows immediately, so we suppose that there are at most 2 components. If one of these components consists of a single edge, then deleting this edge results in a set of $2$-sets whose pointwise stabilizer in $A_n$ is trivial; this is a contradiction of the fact that $\Lambda$ is independent. If all components contain $0$ edges or at least $2$ edges, then it is easy to check that either $n=4$ or else deleting a leaf edge results in a set of $2$-sets whose pointwise stabilizer in $A_n$ is trivial. Again this is a contradiction and we are done.
\item[($k=3$)] We claim that in this case
 $\Base(A_n, \Omega_3)= \Height(A_n, \Omega_3)=n-3$ which is the larger of the two possible values, except when $n=8$. To justify our claim we note first that the value for $n=8$ is easy to obtain. When $n\neq 8$ it is sufficient to prove that $\Base(A_n, \Omega_3)\geq n-3$. This follows simply by observing that the following set is a minimum base of size $n-3$:
\[
 \Big\{\, \{ 1,2,3\}, \, \{1,2,4\}, \, \dots, \, \{1,2, n-1\} \, \Big\}.
\]
\end{enumerate}
We have not investigated the case $k\geq 4$.

Finally, referring to \S\ref{s: context}, we remark that in \cite{cherlin2}, Cherlin calculated $\RC(A_n, \Omega_k)$ precisely (correcting an earlier calculation in \cite{cherlin1}). The comments above imply that for all $k$ and $n$ with $k\leq \frac{n}{2}$ we have
\[
\Height(A_n, \Omega_k)\leq \RC(A_n, \Omega_k)\leq \Height(A_n, \Omega_k)+1. 
\]
Thus, unlike $S_n$, the relational complexity of the action of $A_n$ on $k$-sets does indeed track height.

\newcommand{\etalchar}[1]{$^{#1}$}

\end{document}